\theoremstyle{plain}
\newtheorem{thm}{Theorem}[section]
\newtheorem*{prop*}{Proposition}
\newtheorem{lem}[thm]{Lemma}
\newtheorem*{lem*}{Lemma}
\newtheorem{cor}[thm]{Corollary}
\newtheorem{claim}[thm]{Claim}
\theoremstyle{definition}
\newtheorem{dfn}[thm]{Definition}
\newtheorem{conj}[thm]{Conjecture}
\theoremstyle{remark}
\providecommand{\customgenericname}{}
\newcommand{\newcustomtheorem}[2]{%
  \newenvironment{#1}[1]
  {%
   \renewcommand\customgenericname{#2}%
   \renewcommand\theinnercustomgeneric{##1}%
   \innercustomgeneric
  }
  {\endinnercustomgeneric}
}
\newcommand{\beq}{\begin{equation}}
\newcommand{\eeq}{\end{equation}}
\providecommand{\keywords}[1]{\textbf{\textit{Keywords: }} #1}
\tiny\color{gray},
\DeclareFixedFont{\ttb}{T1}{txtt}{bx}{n}{12} 
\DeclareFixedFont{\ttm}{T1}{txtt}{m}{n}{12}  
\definecolor{deepblue}{rgb}{0,0,0.5}
\definecolor{deepred}{rgb}{0.6,0,0}
\definecolor{deepgreen}{rgb}{0,0.5,0}
\newcommand\pythonstyle{\lstset{
language=Python,
basicstyle=\ttm,
otherkeywords={self},             
keywordstyle=\ttb\color{deepblue},
emph={MyClass,__init__},          
emphstyle=\ttb\color{deepred},    
stringstyle=\color{deepgreen},
frame=tb,                         
showstringspaces=false            %
}}
\newcommand\pythoninline[1]{{\pythonstyle\lstinline!#1!}}
\date{}
\title{\vspace{-0.7cm} {On the number of cliques in graphs with a forbidden subdivision or immersion}}
\author{
Jacob Fox\thanks{Department of Mathematics, Stanford University, Stanford, CA 94305. Email: {\tt jacobfox@stanford.edu}. Research supported by a Packard Fellowship, by NSF Career Award DMS-1352121 and by an Alfred P. Sloan Fellowship.}
\and
Fan Wei\thanks{Department of Mathematics, Stanford University, Stanford, CA 94305. Email: {\tt fanwei@stanford.edu}.}
}
\begin{document}

\maketitle

\begin{abstract}
How many cliques can a graph on $n$ vertices have with a forbidden substructure? Extremal problems of this sort have been studied for a long time. This paper studies the maximum possible number of cliques in a graph on $n$ vertices with a forbidden clique subdivision or immersion. We prove for $t$ sufficiently large that every graph on $n \geq t$ vertices with no $K_t$-immersion has at most $2^{t+\log^2 t}n$ cliques, which is sharp apart from the $2^{O(\log^2 t)}$ factor. We also prove that the maximum number of cliques in an $n$-vertex graph with no $K_t$-subdivision is at most $2^{1.817t}n$. This improves on the best known exponential constant by Lee and Oum. We conjecture that the optimal bound is $3^{2t/3 +o(t)}n$, as we proved for minors in place of subdivision in earlier work. 
\end{abstract}
\keywords{counting cliques, forbidden subdivision, forbidden immersion, container method}

\section{Introduction} \label{sec:intro}

Many questions in extremal graph theory ask to estimate the maximum number of cliques (maybe of a given order) a graph on a given number of vertices can have with a forbidden substructure. For example, Tur\'an's theorem \cite{Tu}, a cornerstone result of extremal graph theory determines the maximum number of edges in a $K_t$-free graph on $n$ vertices, and the only graph to achieve this bound is the balanced complete $(t-1)$-partite graph. This result was later extended by Zykov \cite{Zy}, who determined that the same graph maximizes the number of cliques amongst $K_t$-free graphs on $n$ vertices. 

Questions of this sort for forbidden minors have also been studied for a long time. For example, Mader \cite{Ma,Ma1} showed that for each positive integer $t$ there is a constant $C(t)$ such that every $K_t$-minor-free graph on $n$ vertices has at most $C(t)n$ edges.  Kostochka \cite{Ko,Ko1} and Thomason \cite{Th} independently proved that $C(t)=\Theta(t\sqrt{\log t})$. Thomason \cite{Th} later proved that $C(t)=(\alpha+o(1))t\sqrt{\log t}$, where $\alpha=0.319...$ is an explicit constant.
Motivated by various algorithmic applications (see, e.g., \cite{DKT}, and \cite{RW}), the problem of maximizing the number of cliques in a $K_t$-minor-free graph on $n$ vertices has been studied by various researchers. Norine, Seymour, Thomas, and Wollan \cite{NSTW} and independently Reed and Wood \cite{RW} showed that for each $t$ there is a constant $c(t)$ such that every graph on $n$ vertices with no $K_t$-minor has at most $c(t)n$ cliques. Lee and Oum \cite{LO} proved a conjecture of Wood \cite{Wo} by showing that $c(t) < c^t$ for some absolute constant $c$. The authors in \cite{minor} proved that $c(t)=3^{2t/3 + o(t)}$, which is tight up to the lower order factor. They further generalized the result to graphs with a forbidden connected graph $H$ as a minor, or within a minor-closed family of graphs which is closed under disjoint union. 

Analogous questions for graphs with a forbidden subdivision (also known as topological minors) or immersion have also received much attention. A {\it weak $H$-immersion} in $G$ is a one-to-one mapping $\phi:V(H) \to V(G)$ together with a collection of edge-disjoint paths in $G$, one for each edge $uv$ of $H$ which has end vertices $\phi(u)$ and $\phi(v)$. If none of the internal vertices of the paths are in the image $\phi(V(H))$, then this is a {\it strong $H$-immersion}. If the paths in a strong $H$-immersion are internally vertex disjoint, then the mapping and the paths form an {\it $H$-subdivision} in $G$.

Note that, within a factor two, determining the minimum number of edges (or equivalently, the average degree) needed to guarantee a graph on a given number of vertices has a $K_t$-immersion is the same as determining the minimum degree condition needed to guarantee a $K_t$-immersion. Lescure and Meyniel in \cite{LM} observed that the minimum degree needed to guarantee a strong $K_t$-immersion has to be at least $t-1$, and DeVos et al.~\cite{DeVos7} proved this bound is sharp for $t \leq 7$. An example of Seymour (see \cite{DeVos7}) shows that there are graphs with minimum degree $t-1$ but without a $K_t$-immersion for $t \geq 10$. DeVos et al. \cite{Devos} proved that every graph with minimum degree $200t$ contains a strong $K_t$-immersion. The minimum degree condition to guarantee a strong $K_t$-immersion was recently improved by Dvo\v r\'ak and Yepremyan \cite{DY} to $11t+7$.

The following theorem determines, up to a lower order factor, the maximum number of cliques in a graph on $n$ vertices without a $K_t$-immersion. 

\begin{thm} \label{immerse}
Every graph on $n$ vertices with no strong $K_t$-immersion has at most $2^{t + {\log_2^2} t}n$ cliques.  This bound is sharp up to a factor $2^{O(\log^2 t)}$.
\end{thm}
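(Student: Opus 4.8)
The plan is to bound the number of cliques in a graph $G$ on $n$ vertices with no strong $K_t$-immersion by a local argument: find a vertex whose closed neighborhood has bounded structure, count the cliques through that vertex, delete it, and induct on $n$. So the real content is a statement of the form: every such $G$ contains a vertex $v$ such that the number of cliques containing $v$ is at most $2^{t+\log_2^2 t}$ (or the number of cliques contained in $N[v]$ is suitably bounded, with a constant that telescopes correctly). Summing the per-vertex bound over the induction gives the linear-in-$n$ factor.

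To produce such a vertex, I would first exploit the edge-density consequence of forbidding a $K_t$-immersion. By the result of Dvořák–Yepremyan quoted above, a graph with minimum degree at least $11t+7$ has a strong $K_t$-immersion; hence $G$ has a vertex of degree $O(t)$, and more usefully, by iteratively removing low-degree vertices, $G$ is "$O(t)$-degenerate" in the sense that every subgraph has a vertex of degree $O(t)$. Degeneracy alone only gives a $2^{O(t)}n$ bound with a bad constant, though — the number of cliques through a vertex of degree $d$ could be as large as $2^d$. The key refinement is that the neighborhood $N(v)$ of a low-degree vertex $v$ cannot itself be too dense: if $G[N(v)]$ contained a large immersion-rich structure, one could route it together with edges to $v$ and beyond to build a $K_t$-immersion in $G$. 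Concretely, I would aim to show that for the chosen vertex $v$, the graph $G[N(v)]$ has no strong $K_{t'}$-immersion for some $t' \approx t$, and more importantly has bounded independence-number-type or degeneracy-type structure forcing its clique count to be at most $2^{t+\log_2^2 t}$.

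The cleanest route to the clique count in $N[v]$ is probably this: show that $N(v)$ can be covered by (or has small "fractional" structure related to) roughly $t$ sets each of which is a clique or independent set, or more directly, that $G[N[v]]$ has chromatic-number-like or VC-dimension-like bounds. A natural target: if $G$ has no $K_t$-immersion, then the number of cliques in $G[N[v]]$ for a minimum-degree vertex $v$ is at most $2^{t}\cdot t^{O(\log t)} = 2^{t+O(\log^2 t)}$. I expect this to follow from a Ramsey-type / entropy-type counting: partition $N(v)$ according to adjacency patterns, use that long induced paths or large bicliques inside $N(v)$ would yield immersions, and bound the count via a recursion that loses only a $\mathrm{poly}(t)$ factor per level and has $O(\log t)$ levels — that is exactly where the $\log^2 t$ in the exponent comes from ($O(\log t)$ levels each contributing $O(\log t)$ to the exponent).

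**The main obstacle** will be the neighborhood-structure lemma: translating "$G[N(v)]$ is rich" into "$G$ has a $K_t$-immersion." Unlike minors, immersions are governed by edge-disjointness rather than vertex-disjointness, so I cannot simply contract; instead I must route many edge-disjoint paths, and the paths for the immersion model must avoid re-using edges. The argument should leverage that $v$ has many neighbors to serve as branch vertices and that the edges incident to $v$, plus edges inside $N(v)$, plus a few short detours through the rest of $G$, provide enough edge-disjoint connectivity. Making this routing quantitatively efficient — so that a structure of "immersion-complexity" roughly $t$ inside $N(v)$ (rather than, say, $2t$) suffices — is what keeps the exponential base at exactly $2$ and is the delicate part. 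Finally, for sharpness, I would exhibit a graph with $2^{t - O(\log^2 t)}n$ cliques and no strong $K_t$-immersion, presumably built from disjoint copies of a small dense gadget (a clique of order about $t$, or a blow-up thereof, carefully sized so that its immersion number stays below $t$ while its clique count is near $2^t$) glued along a sparse backbone so the count scales linearly in $n$.
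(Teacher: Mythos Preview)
Your overall architecture is right: pick a low-degree vertex using the Dvo\v r\'ak--Yepremyan bound, recurse inside its neighborhood, and aim for $O(\log t)$ levels each costing a $\mathrm{poly}(t)$ factor. That is exactly the shape of the paper's argument. But the proposal has a genuine gap at the step you yourself flag as the ``main obstacle'': you never produce a concrete mechanism that forces the neighborhood graph to shrink geometrically. The suggestions you offer --- Ramsey/entropy counting, adjacency-pattern partitions, forbidding long induced paths or large bicliques --- are vague, and none of them obviously yields the clean halving you need. In particular, ``$G[N(v)]$ has no strong $K_{t'}$-immersion for $t'\approx t$'' is just the original hypothesis again on a graph of size $O(t)$; without a new structural fact you are stuck with the trivial $2^{O(t)}$ bound and cannot isolate the base-$2$ constant.

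The missing ingredient is a simple density lemma that the paper proves directly: if a graph $H$ on $m$ vertices has maximum \emph{missing} degree less than $(m-t+2)/2$, then $H$ already contains a strong $K_t$-immersion (any $t$ vertices can serve as branch vertices, and each nonadjacent pair is joined through a common neighbor by a path of length two; there are enough common neighbors outside the branch set to do this greedily and edge-disjointly). Contrapositively, inside any $K_t$-immersion-free graph $G_i$ on $n_i$ vertices there is a vertex with at least $(n_i-t+2)/2$ non-neighbors. Peeling that vertex and its non-neighbors gives $n_{i+1}-t < (n_i-t)/2$, so from $n_1 = O(t)$ one reaches $n_r \le t+1$ in $r = O(\log t)$ steps. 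The $2^{\log^2 t}$ factor is then just $\binom{O(t)}{O(\log t)}$, counting the choices for the first $r$ peeled vertices, and the final $2^{t+1}$ comes from the at most $t+1$ remaining vertices. No routing through $v$, no biclique or path analysis, is needed.

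For sharpness, the paper's example is simpler than a glued-gadget construction: take a clique on $t-2$ vertices and attach $n-t+2$ further vertices each complete to the clique. This has $2^{t-2}(n-t+3)$ cliques and no $K_t$-immersion because only $t-2$ vertices have degree $\ge t-1$.
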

Theorem \ref{immerse} also holds for weak immersion instead of strong immersion. As a strong immersion is also a weak immersion, the upper bound on the number of cliques in Theorem \ref{immerse} also applies to weak $K_t$-immersion. The following construction shows that for $n \geq t-2$, there is a graph on $n$ vertices with no weak $K_t$-immersion and has $2^{t-2}(n-t+3)$ cliques. Begin with a clique $K$ on $t-2$ vertices. For the remaining $n-t+2$ vertices, its neighborhood is $K$.  Since $|K|=t-2<t$, the vertices not in $K$ have degree less than $t-1$, and the $t$ end vertices of a $K_t$-immersion must have degree at least $t-1$, then this graph does not have a $K_t$-immersion. The cliques in this graph are formed by taking any subset of $K$ (and there are $2^{t-2}$ such subsets) and at most one of the remaining $n-t+2$ vertices, for which there are $n-t+3$ possibilities, giving a total of $2^{t-2}(n-t+3)$ cliques. We conjecture that this is the maximum number of cliques a graph on $n \geq t-2$ vertices can have if it contains no $K_t$-immersion.

The analogous problems for forbidden subdivisions appears to be more challenging. Koml\'os and Szemer\'edi \cite{KS} and Bollob\'as and Thomason \cite{BT} solved a longstanding conjecture of Erd\H{o}s, Hajnal, and Mader by showing  that every graph on $n$ vertices with no $K_t$-subdivision has $O(t^2n)$ edges, which is tight apart from the implied constant factor. Our next theorem significantly improves the exponential constant obtained by Lee and Oum \cite{LO} for the number of cliques in a graph with a forbidden clique subdivision. 

\begin{thm} \label{main2} 
Every graph on $n$ vertices with no $K_t$-subdivision has at most $2^{1.817t + o(t)}n$ cliques. 
\end{thm}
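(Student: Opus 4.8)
The plan is to reduce Theorem~\ref{main2} to a statement about vertex neighbourhoods, feed in the Bollob\'as--Thomason/Koml\'os--Szemer\'edi degeneracy bound as the main quantitative input, and then run a global clique count in the spirit of \cite{minor}, with the exponential constant $1.816$ emerging from optimising its parameters. First set $c(t)$ to be the supremum over $n$ of $\tfrac1n$ times the maximum number of cliques of an $n$-vertex graph with no $K_t$-subdivision, so the assertion becomes $c(t)\le 2^{1.816t+o(t)}$. The elementary engine is the identity $\#\mathrm{cliques}(G)=\#\mathrm{cliques}(G-v)+\#\mathrm{cliques}(G[N(v)])$ together with the observation that attaching a new vertex joined to all of a graph $H$ turns a $K_s$-subdivision of $H$ into a $K_{s+1}$-subdivision (the new vertex becomes a branch vertex joined by single edges to the others). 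Since every vertex of $G$ is joined to all of its neighbourhood, this shows $G[N(v)]$ is $K_{t-1}$-subdivision-free whenever $G$ is $K_t$-subdivision-free, and iterating, $G\big[\bigcap_{u\in S}N(u)\big]$ is $K_{t-|S|}$-subdivision-free for every clique $S$ of $G$.

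Iterating the identity along a degeneracy ordering and using that, by the theorem of Bollob\'as--Thomason and Koml\'os--Szemer\'edi quoted above, $K_t$-subdivision-free graphs are $d(t)$-degenerate with $d(t)=O(t^2)$, one obtains $\#\mathrm{cliques}(G)\le 1+n\max_v\#\mathrm{cliques}(B_v)$, where $B_v$ — the set of neighbours of $v$ preceding it in the ordering — induces a $K_{t-1}$-subdivision-free graph on at most $d(t)$ vertices. The catch is that this reduction, combined only with $d(t)=O(t^2)$ and $\omega\le t-1$, yields merely $c(t)\le 2^{O(t\log t)}$: peeling vertices one at a time pays a factor polynomial in $t$ at each of roughly $t$ levels, and any ``one unit down in $t$'' recursion of the form $c(t)\le 1+c(t-1)\,\mathrm{poly}(t)$ unavoidably blows up to $2^{\Theta(t\log t)}$.

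To bring the exponent down to $O(t)$ — and then to $1.816$ — the count must be reorganised so that the polynomial-in-$t$ factors are incurred only a sublinear number of times. This is the heart of the matter, and it is where one must exploit the subdivision structure rather than just the degeneracy corollary: for instance, a pair of adjacent vertices whose common neighbourhood induces a $K_{t-2}$-subdivision would already yield a $K_t$-subdivision, so clique-rich neighbourhoods are severely constrained, and a $K_{t-1}$-subdivision-free graph that is large is forced to be ``thin'' (roughly, essentially a disjoint union of bounded-size pieces) rather than Tur\'an-like. Following \cite{minor}, one then sets up a weighting of the cliques, charging each clique to a carefully chosen vertex so that the vertices with wide, clique-heavy back-neighbourhoods make up only a lower-order fraction of all vertices without creating a $K_t$-subdivision somewhere; the extremal trade-off between how wide a layer one peels and how many cliques that layer can host while remaining subdivision-free is what produces $2^{1.816t}$, as opposed to the conjecturally optimal $2^{(2/3)(\log_2 3)\,t}\approx 2^{1.057t}$ realised by the same cocktail-party-type construction that is extremal for minors.

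The main obstacle is precisely this last reorganisation. One cannot black-box the degeneracy bound, since a $K_t$-subdivision-free graph really can have $\Theta(t^2)$-degeneracy; the point is rather to certify, using the Koml\'os--Szemer\'edi path-routing argument itself and not merely its degeneracy consequence, that the clique-dense parts of the graph are effectively thin, so that one cannot simultaneously have many vertices with large and clique-heavy neighbourhoods. Making the bookkeeping tight enough to reach $1.816$ — in particular controlling the over-counting inherent in charging each clique to one of its vertices, so that the cocktail-party examples are over-counted by at most a $2^{o(t)}$ factor — is where the real work lies, and pushing the same scheme all the way to the conjectured constant $\tfrac23\log_2 3$ is what it does not yet achieve.
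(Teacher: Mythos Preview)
Your proposal is not a proof: you correctly identify that the naive neighbourhood recursion $c(t)\le 1+d(t)\,c(t-1)$ only gives $2^{O(t\log t)}$, and then you describe in purely heuristic terms (``weighting of the cliques'', ``charging each clique to a carefully chosen vertex'', ``clique-dense parts are effectively thin'') a reorganisation that you explicitly concede is ``where the real work lies'' without carrying any of it out. There is no lemma, no inequality, no computation in your write-up that even reaches an $O(t)$ exponent, let alone the constant $1.816$.

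The paper's argument is structurally quite different from what you sketch. It does \emph{not} use the drop ``$G[N(v)]$ is $K_{t-1}$-subdivision-free'' as its engine, nor any charging/weighting scheme. Instead it peels $G$ by repeatedly deleting a vertex of minimum degree (in the current subgraph) together with its non-neighbours, and shows that after only $t^{0.9+o(1)}$ steps one lands in an induced subgraph $G_r$ with at most $t^{5/3}$ vertices and maximum \emph{missing} degree $\Delta=o(t)$. The $K_t$-subdivision-freeness is then converted, via an elementary lemma (if a $t$-set has at most $n-t-2\Delta$ missing edges one can route all missing pairs through distinct common neighbours outside, giving a $K_t$-subdivision), into the purely combinatorial condition ``every $t$-set of $G_r$ misses at least $n_r-t-o(t)$ edges''. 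The clique count is then controlled by a function $f(n,x,t)$ (maximum number of cliques over $n$-vertex graphs in which every $t$-set misses $\ge x$ edges), bounded recursively: one repeatedly strips a vertex of maximum missing degree, tracking how fast that degree must shrink because of the missing-edge condition, and bottoms out with a sharp count (Lemma~\ref{boundt}) for graphs on $t$ vertices with bounded missing degree. Optimising this recursion---not a charging argument---is what produces first the clean bound $2^{3t+o(t)}$ and, after a more delicate numerical analysis, $2^{1.816t+o(t)}$. None of these steps appears in your proposal, and your suggested route via the Koml\'os--Szemer\'edi path-routing internals is neither what the paper does nor something you have made precise.
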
 

The proof of Theorem \ref{main2} has some similarities to the proof of Theorem \ref{immerse}, but is much more involved. We first establish a weaker result, giving an upper bound on the number of cliques of $2^{3t+o(t)}n$ in $n$-vertex graphs with no $K_t$-subdivision. We then discuss how optimization of the computation gives the bound claimed in Theorem \ref{main2}. The same lower bound construction for minors  discovered by Wood also forbids clique subdivisions of the same size and shows that the constant factor 1.817 cannot be improved to less than $\frac{2}{3}\log_2 3\approx 1.06$. We conjecture that the constant $\frac{2}{3}\log_2 3$ is best possible as it is for minors. This conjecture as well as some possible approaches to improve the bound further are discussed in the concluding remarks. 

The proofs of Theorems \ref{immerse} and \ref{main2} utilize a method to enumerate all the cliques, which we call the \emph{peeling process} and is explained in detail in the next section. This method strongly resembles the powerful container method. Early ideas along these lines appear in the works of Kleitman and Winston \cite{KW} and Sapozhenko \cite{Sa1,Sa2,Sa3}. However, it was only fairly recently that the power and generality of the idea was fully realized and developed by Balogh, Morris, and Samotij \cite{BMS} and independently by Saxton and Thomason \cite{ST}.  In the last few years, many applications of this technique were discovered in probabilistic and extremal combinatorics, see the recent survey \cite{BMS2} for more.


\vspace{0.2cm}

\noindent {\bf Organization:} In the next section we prove Theorem \ref{immerse} on the number of cliques in graphs with no strong $K_t$-immersion. We prove Theorem \ref{main2} on the number of cliques in graphs with no $K_t$-subdivision in Section \ref{subsec}. We finish with some concluding remarks. All logarithms are base $2$ unless otherwise noted. For the sake of clarity of presentation, we sometimes omit floor and ceiling signs.

\section{Counting cliques in graphs with no $K_t$-immersion}\label{immersesec}
In the first subsection we prove a lemma which gives a bound on the minimum degree of a graph on $n$ vertices with no strong $K_t$-immersion. In the following subsection, we discuss a simple technique to enumerate the cliques of a graph. We then also use the lemma, and the result of \cite{Devos,DY} which bounds the minimum degree in a graph with no strong $K_t$-immersion. All these ingredients together, with appropriate analysis, will give Theorem \ref{immerse}.

\subsection{Clique immersions in dense graphs}
In this subsection, we give a sufficient condition on the minimum degree for a graph on $n$ vertices to contain a strong $K_t$-immersion. We use the term {\it missing edge} for a nonadjacent pair of distinct vertices and the term {\it missing degree} for the number of missing edges incident to a particular vertex. 

\begin{lem}\label{immersionnumber}
If a graph $G=(V,E)$ with $n$ vertices has a vertex subset $T$ with $|T|=t$ and every vertex in $T$ has missing degree less than $(n-t+2)/2$, then $G$ has a strong $K_t$-immersion with $T$ being the set of end vertices. In particular, if a graph with $n$ vertices and maximum missing degree less than $(n-t+2)/2$, then it contains a strong $K_t$-immersion. 
\end{lem}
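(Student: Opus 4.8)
The statement I want to prove is Lemma~\ref{immersionnumber}: if $G$ has a set $T$ of $t$ vertices each with missing degree less than $(n-t+2)/2$, then $G$ contains a strong $K_t$-immersion with branch vertices exactly $T$. The natural strategy is to construct the $\binom{t}{2}$ edge-disjoint paths greedily, one pair of $T$-vertices at a time, making sure that each new path avoids the previously used edges and has no internal vertex in $T$. So I will order the missing edges inside $T$ — i.e., the pairs $\{u,v\}\subseteq T$ that are \emph{not} edges of $G$ — and for each such pair route a path through $V\setminus T$; all genuine edges of $G[T]$ are already length-one paths and cost nothing. For a missing pair $\{u,v\}$, the cheapest useful path has length two: pick a common neighbor $w\in V\setminus T$ of both $u$ and $v$, and use the two edges $uw$, $wv$. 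Since $w\notin T$, this is a strong immersion path; the only thing to control is edge-disjointness from the paths already built.

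**The counting step.** The key point is a deficiency count. Fix a missing pair $\{u,v\}$ in $T$ and suppose we are processing it after having already built paths for the other missing pairs. There are at most $\binom{t}{2}$ missing pairs in total, and each previously built length-two path through some $w'\in V\setminus T$ consumes two edges, both incident to $w'$; so the number of vertices $w\in V\setminus T$ that are ``blocked'' as midpoints (because one of the edges $uw$ or $wv$ has already been used) is small — roughly on the order of $t^2$ at worst, but actually I only need a bound that is beaten by the number of available common neighbors. The number of common neighbors of $u$ and $v$ inside $V\setminus T$ is at least $(n-t) - (\text{missing degree of }u) - (\text{missing degree of }v)$ once we restrict attention to $V\setminus T$, which by hypothesis is strictly greater than $(n-t) - 2\cdot\frac{n-t+2}{2} + (\text{correction for }T) $. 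Let me be a little more careful: $u$ is adjacent to all but fewer than $(n-t+2)/2$ vertices of $V$, so it is nonadjacent to fewer than $(n-t+2)/2$ vertices; the same for $v$; hence the number of vertices of $V\setminus\{u,v\}$ adjacent to both $u$ and $v$ is more than $(n-2) - 2\big((n-t+2)/2 - 1\big) = n - 2 - (n-t+2) + 2 = t - 2$. Removing the at most $t-2$ vertices of $T\setminus\{u,v\}$ still leaves a strictly positive count — so there is at least one common neighbor of $u,v$ in $V\setminus T$. This is exactly enough to handle the first missing pair; to handle all of them I need that the number of common neighbors in $V\setminus T$ strictly exceeds the number of those already ``used up'' as midpoints by earlier paths.

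**Resolving the obstacle.** The subtle point — and the main obstacle — is that a naive greedy bound ``at most $2\binom{t}{2}$ used edges total'' is not obviously smaller than the ``$>t-2$ common neighbors'' guaranteed above, since $t-2$ is far less than $t^2$. So the length-two routing alone, processed in arbitrary order, will not close the argument: one must be cleverer. The fix I would pursue is to allow longer paths and to argue more globally, the way such immersion/linkage results are usually proved. Concretely: I would argue that after routing as many missing pairs as possible with vertex-disjoint length-two paths (each using a private midpoint $w\in V\setminus T$), the minimum-degree/high-density hypothesis forces the ``leftover'' structure to still be highly connected, so that an Euler-subgraph or flow argument (in the spirit of the Nash-Williams / Mader arguments used in the immersion literature, e.g.\ the $11t+7$ bound of Dvo\v{r}\'ak–Yepremyan cited above) produces the remaining edge-disjoint paths. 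Equivalently, since every vertex of $T$ has missing degree $<(n-t+2)/2$, the graph $G$ restricted to an appropriate set has every relevant vertex of degree more than half the number of vertices, which makes $G[T\cup S]$ for a suitable $S\subseteq V\setminus T$ of size about $n-t$ a graph in which one can realize $K_t$ on $T$ by edge-disjoint paths: split the non-edges of $G[T]$ into a collection where each common-neighbor ``budget'' is spent at most once, using that the number of common neighbors of any pair in $V\setminus T$ is more than $t-2$ — and since each of the $<\binom{t}{2}$ missing pairs needs only one fresh midpoint, and each midpoint $w$ can in fact serve \emph{many} pairs as long as the edges $uw$ are distinct (a single $w$ adjacent to $k$ vertices of $T$ can route $\lfloor k/2\rfloor$ missing pairs edge-disjointly), a Hall-type / matching argument distributes the missing pairs among the available midpoints. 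I expect the write-up to hinge on this matching/flow distribution being feasible, which is where the exact constant $(n-t+2)/2$ will be used sharply; the length-two path idea and the common-neighbor count above are the routine ingredients, and the packing of missing pairs onto midpoints is the step I would budget the most care for.
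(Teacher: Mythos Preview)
Your setup and your common-neighbor count are exactly right, and they match the paper's: any missing pair $\{u,v\}\subseteq T$ has a common neighbor in $S=V\setminus T$. The gap is precisely where you flag it yourself: you never establish edge-disjointness, and the Hall-type/flow gestures at the end are not a proof --- you explicitly leave ``the packing of missing pairs onto midpoints'' as the step you would still have to do. So as it stands the proposal is incomplete.

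The paper resolves the obstacle with a one-line trick you are missing. Instead of trying to pack all the length-two paths simultaneously, process a single missing pair $\{u,v\}$: pick a common neighbor $w\in S$ (your count gives one), and \emph{modify the graph} to $G'=G+uv-uw-wv$. The key observation is that the degrees of $u$ and $v$ in $G'$ equal their degrees in $G$ (each loses one edge to $w$ and gains the edge $uv$), and every other vertex of $T$ is untouched; hence every vertex of $T$ still has missing degree $<(n-t+2)/2$ in $G'$, while the number of missing pairs inside $T$ has dropped by one. Now induct on that number. The immersion produced in $G'$ cannot use $uw$ or $wv$ (they are not edges of $G'$), so when you replace its edge $uv$ by the path $u\,w\,v$, edge-disjointness in $G$ is automatic.

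In other words, greedy \emph{does} work, but the common-neighbor search at each step must be carried out in the evolving graph $G'$ rather than in the original $G$. Because the hypothesis is degree-preserving under this swap, the same ``at least one common neighbor in $S$'' count applies at every stage, and the ``$>t-2$ versus $\binom{t}{2}$'' mismatch you worried about never arises. No matching or flow argument is needed.
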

\begin{proof}

Let $|S|=V \setminus T$, so $|S| = n-t$. To create a strong $K_t$-immersion with $T$ the set of end vertices, we need to connect the nonadjacent pairs of distinct vertices in $T$ by edge-disjoint paths with internal vertices in $S$. We prove more, that it can be done with only paths of length two. Each of these paths have a single internal vertex, which is in $S$.

The proof is by induction on the number  $m$ of nonadjacent pairs of vertices in $T$. If there are no nonadjacent pairs, then $T$ is a clique and forms a strong $K_t$-immersion, thus settling the base case $m=0$. Suppose we have established the result for $m-1$, and $T$ has $m$ nonadjacent pairs of distinct vertices. Consider a pair of nonadjacent distinct vertices $u,v \in T$. The number of vertices other than $u$ and $v$ which are not adjacent to $u$ or $v$ is less than $2(n-t+2)/2 - 2 =  n - t = |S|$. Thus, there is a vertex $w \in S$ which is adjacent to both $u$ and $v$. Let $G'=(V,E')$ be the graph formed from $G$ by adding the edge $(u,v)$ and deleting the edges $(u,w)$, $(w,v)$. Note that the degrees of the vertices in $T$ is the same in $G'$ as in $G$ and the number of missing edges in $T$ in $G'$ is $m-1$. Thus, by the induction hypothesis, there is a strong $K_t$-immersion in $G'$ with $T$ the set of end vertices and for paths using all edges in $T$ and paths of length two to connect the nonadjacent vertices in $T$. Replacing the path in this strong $K_t$-immersion which is the single edge $(u,v)$ by the length two path with edges $(u,w)$ and $(w,v)$, we obtain a strong $K_t$-immersion in $G$, completing the proof by induction. 
\end{proof}

We remark that the maximum missing degree condition in the lemma above is tight. In other words, there is a graph with maximum missing degree $(n-t+2)/2$ which has a vertex subset $T$ with $|T| = t$ such that there is no strong $K_t$-immersion with $T$ being the set of end vertices. The construction is as follows. Let $T$ be a set of $t$ vertices with only one missing edge: $u, v \in T$ are not adjacent. Let $S_1, S_2$ be two cliques with $(n-t)/2$ vertices each. There are no edges between $S_1, S_2$. All vertices in $S_1$ are connected to all vertices in $T$ except $v$; all vertices in $S_2$ are connected to all vertices in $T$ except $u$. Let $G$ be the resulting graph with vertex set $T \cup S_1 \cup S_2$. It is easy to check that the maximum missing degree in $G$ is $(n-t)/2+1$. To show that there is no strong $K_t$-immersion in $G$ with $T$ the set of end vertices, notice that $u,v \in T$ are not adjacent, and their neighborhoods in $V(G) \setminus T$ are disjoint. So there is no path using interior vertices in $V(G) \setminus T$ to connect $u,v$. Thus there is no strong $K_t$-immersion with $T$ being the set of end vertices. However, in this graph there is a weak $K_t$-immersion with $T$ being the set of end vertices.

\subsection{Proof of Theorem \ref{immerse}}\label{finimmersion}

Lee and Oum \cite{LO} (and in earlier works such as \cite{KW}) present a simple greedy algorithm to enumerate all the cliques in a graph $G$, which we call the peeling process. It is useful in bounding the number of cliques in a graph. We used this method in \cite{minor} to bound the number of cliques in graphs with a forbidden minor. We also utilize this method here, and thus copy the description of the process as it is written in \cite{minor} here. 

\vspace{0.3cm}

\noindent {\bf Peeling Process}  

Given a graph $G$, pick a minimum degree vertex $v_1$. We first enumerate all cliques containing $v_1$. We accomplish this by applying the algorithm to the induced subgraph of $G$ with vertex set being the neighborhood $N(v_1)$ to obtain all cliques within $N(v_1)$, and add $v_1$ to each of these cliques. Once we have exhausted all cliques containing $v_1$, we delete it from the graph and continue the algorithm to the induced subgraph of the remaining vertices. 


In this way, every clique $K$ in $G$ has an ordering $v_1,\ldots,v_s$ of its vertices so that the following holds. Let $G_0=G$. After deleting vertices one at a time of degree smaller than the degree of $v_1$, we obtain an induced subgraph $G_1$ that contains $K$ in which $v_1$ has the minimum degree.  After picking $v_1,\ldots,v_i$, we delete from $G_i$ vertex $v_i$ and its nonneighbors, and vertices one at a time of degree smaller than that of $v_{i+1}$ and obtain an induced subgraph $G_{i+1}$ of $G_i$ that contains $K \setminus \{v_1,\ldots,v_i\}$ and in which $v_{i+1}$ has the minimum degree. 

The peeling process allows us to focus on a remaining induced subgraph which is small or quite dense and is easier to analyze. There are not many vertices of any given clique not in this remaining induced subgraph. These few vertices contribute a relatively small factor to the total number of cliques.

\begin{proof}[Proof of Theorem \ref{immerse}]
Similar to our approach in in \cite{minor}, we first apply the peeling process described above to the graph $G$. Let $K$ be a clique on $s$ vertices. Let $n_i$ denote the number of vertices in $G_i$.  Let $r=r(K)$ be the least positive integer such that $n_r \leq t + 1$ or $r=s$.

We first give a bound on $r$. The results of \cite{Devos,DY} implies, as the simple graph $G$ does not contain a $K_t$-immersion, that every subgraph has a vertex of degree at most $d=11t+8 < 20t$.  Hence $n_2 \leq d$.

As $G$ and hence $G_i$ does not contain a strong $K_t$-immersion, Lemma \ref{immersionnumber} implies that the maximum missing degree in $G_i$ has to be at least $(n_i-t+2)/2$. 
Thus by peeling off the vertex with the maximum missing degree and its non-neighbors, $n_{i+1} \leq n_i - 1 - (n_i-t+2)/2=(n_i+t)/2 - 2$. In particular, $n_{i+1} - t < (n_i - t)/2$.
Since $n_1 \leq d < 20t$ and $n_{r-1} > t + 1$ unless $r=s < t$, we have that $r \leq 1+\log_2 \frac{n_1 - t }{n_{r-1} - t}  < 1+\log_2 (19t) < \log_2 t + 6:= r_0$. 

We next give a simple bound on the number of choices for $v_1,\ldots,v_r$. We have $n_0=n$ choices for $v_1$. Since $v_2, \dots, v_r$ has to be chosen as vertices in $G_1$, the number of choices for $v_2,\ldots,v_r$ after having picked $v_1$ is at most 
$${|G_2|-1 \choose \leq r_0} \leq r_0 \binom{20t}{r_0} \leq r_0 \left(\frac{e 20t}{r_0}\right)^{r_0} <   2^{\log^2 t - 2},$$
where the last inequality follows from substituting in the value of $r_0$ and using that $t$ is sufficiently large. 
Hence, there are at most $n2^{\log^2 t -2}$ choices for $v_1,\ldots,v_r$. 

Recall that $G$ has $n$ vertices and no $K_t$-immersion and our goal is to bound the number of cliques in $G$. We have already bounded the number of choices for the first $r$ vertices, and it suffices to bound the number of choices for the remaining vertices. We split the cliques into two types: those with $r<s$ and $n_r\leq t+1$, and those with  $r=s$.

We first bound the number of cliques with $r<s$ and $n_r \leq t+1$. As there are at most $t+1$ possible remaining vertices to include in the clique after picking $v_1,\ldots,v_r$, there are at most $2^{t+1}$ ways to extend these vertices. Thus there are at most $n2^{\log^2 t -2}2^{t+1}=n2^{t+\log^2 t -1}$ cliques of the first type. 

We next bound the number of cliques with $r=s$. We saw that this is at most $n2^{\log^2 t -2}$. 
Adding up all possible cliques, we get at most $n2^{t+\log^2 t -1}+n2^{\log^2 t -2} < n2^{t+\log^2 t}$ cliques in $G$, completing the proof. 

\end{proof}


\section{Counting cliques in graphs with no $K_t$-subdivision}\label{subsec}
In this section we prove Theorem \ref{main2}. It has some similarities to the proof of Theorem \ref{immerse}, but is substantially more involved and uses several additional ideas. We apply the peeling process to reduce the problem of bounding the number of cliques in any graph on $n$ vertices without a $K_t$-subdivision to bounding the number of cliques in a graph without a  $K_t$-subdivision that is \emph{very dense}, meaning that the minimum degree is $n-o(n)$. More precisely, the main factor in $t$ in the bound comes from the cliques in a very dense induced subgraph, and a lower order factor comes from the other vertices. 

We determine some relevant graph parameters to bound the size of the largest clique subdivision in a very dense graph. We then find a recursive bound on the number of cliques involving these graph parameters, and analyze this recursive bound to get the desired bound for very dense graphs. 




The proof is organized in the following way. In Subsection \ref{count2}, we show that the order of the largest clique subdivision in a very dense  graph on $n$ vertices can be approximated in terms of other graph parameters.  Then, in Subsection \ref{rec2}, we use these parameters to give a recursive bound for the number of cliques in a very dense graph. Finally, in Subsection \ref{fin2}, we combine the peeling process described in Section \ref{immersesec} and the recursive  bound we found in Subsection \ref{rec2} to prove a bound in the number of cliques in any graph without a $K_t$-subdivision.

\subsection{Clique subdivision number of very dense graphs}\label{count2}
For a graph $G$, the clique subdivision number $\sigma(G)$ is the maximum $h$ for which $G$ contains a $K_h$-subdivision. We will use some other graph parameters to bound $\sigma(G)$. 

\begin{lem}\label{simplem}
If a graph $G$ with $n$ vertices and minimum degree $n-\Delta-1$ has a vertex subset $T$ of order $t$ and at most $n-t-2\Delta$ missing edges, then $G$ contains a $K_t$-subdivision with the vertices of $T$ as end vertices. 
\end{lem}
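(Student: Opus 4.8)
The plan is to mirror the proof of Lemma~\ref{immersionnumber}, replacing edge-disjoint length-two paths by internally vertex-disjoint paths, and to induct on the number $m$ of missing edges in $T$. Write $S = V \setminus T$, so $|S| = n-t$. The base case $m=0$ is immediate: $T$ is already a clique, hence a $K_t$-subdivision with the vertices of $T$ as branch vertices. For the inductive step, suppose the claim holds whenever there are at most $m-1$ missing edges in the prescribed set, and suppose $T$ has exactly $m \geq 1$ missing edges. Pick a missing edge $uv$ in $T$. I want to route it through a single vertex $w \in S$ adjacent to both $u$ and $v$, then pass to the graph $G'$ obtained by adding the edge $uv$, deleting $w$ (together with all its incident edges), so that $G'$ has one fewer missing edge in $T$ and one fewer vertex overall; then apply induction and splice the length-two path $u\,w\,v$ back in. Since $w$ is deleted entirely, the paths produced for $G'$ never use $w$, so the final collection of paths is internally vertex-disjoint, as required for a subdivision.

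The crucial points are (i) such a $w$ exists, and (ii) deleting it keeps the hypotheses of the lemma satisfied for $G'$ with parameter $m-1$. For (i): the vertices of $S$ nonadjacent to $u$ number at most $\Delta$ (since $\deg(u) \geq n - \Delta - 1$ and $u$ has $t-1$ potential neighbours in $T$, at most $\Delta$ of the remaining $n-1-(t-1) = n-t$ vertices of $S$ can be missing; more simply, $u$ has at most $\Delta$ nonneighbours total), and likewise at most $\Delta$ vertices of $S$ are nonadjacent to $v$. Hence at most $2\Delta$ vertices of $S$ fail to be common neighbours of $u$ and $v$, and since the number of missing edges in $T$ is at most $n - t - 2\Delta$, in particular $m \geq 1$ forces $n - t - 2\Delta \geq 1$, so $|S| = n - t \geq 2\Delta + 1 > 2\Delta$ and a common neighbour $w \in S$ exists. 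For (ii): in $G' = G - w + uv$ we have $n' = n - 1$ vertices; every vertex retains at least $n - \Delta - 2 = n' - \Delta - 1$ neighbours (deleting one vertex drops each degree by at most one), so the minimum-degree hypothesis holds with the same $\Delta$; and the number of missing edges in $T$ drops from $m$ to $m-1 \leq n - t - 2\Delta - 1 = n' - t - 2\Delta$, so the missing-edge hypothesis holds as well. By induction $G'$ has a $K_t$-subdivision with branch vertices $T$; re-inserting the path $u\,w\,v$ yields one in $G$.

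The main obstacle — and the reason the bookkeeping must be done carefully rather than just quoted from Lemma~\ref{immersionnumber} — is the interplay between the two hypotheses as $n$ shrinks: each deletion costs one vertex, which simultaneously weakens the available ``room'' $n - t - 2\Delta$ by one and (harmlessly) tightens the degree bound by one, while consuming exactly one missing edge. One must check that the quantity $n - t - 2\Delta$ decreases in lockstep with $m$, which is exactly what the inequality $m \leq n - t - 2\Delta$ encodes and what makes the induction close. I expect no other difficulty; the argument is a clean strengthening of the immersion lemma, the point being that full deletion of the connector vertex $w$ (as opposed to merely deleting its two used edges) is what upgrades ``edge-disjoint'' to ``internally vertex-disjoint.''
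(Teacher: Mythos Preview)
Your proof is correct and follows essentially the same approach as the paper: both route each missing edge of $T$ through a distinct common neighbour in $S = V \setminus T$ via a path of length two, using the bound $|N(u) \cap N(v) \cap S| \geq n - t - 2\Delta$ together with the assumption that there are at most $n - t - 2\Delta$ missing edges to guarantee enough such vertices. The paper phrases this as a one-line greedy selection rather than an induction with vertex deletion, but the two arguments are the same in substance.
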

\begin{proof}
For each missing edge $(v_1,v_2)$ with both vertices in $T$, we have $|N(v_1)| \geq n- \Delta-1$ and $|N(v_2)| \geq n- \Delta-1$. As $v_1,v_2$ are not adjacent, we therefore have $|N(v_1) \cap N(v_2)| \geq n - 2\Delta$ and hence $$|N(v_1) \cap N(v_2)\cap (V \setminus T)| \geq n - 2\Delta -t.$$ For each missing edge in $T$, we can greedily pick a common neighbor of its vertices outside $T$ to be the internal vertex of a path of length two connecting these two end vertices. We therefore obtain a clique subdivision with $T$ being the set of end vertices, and using edges and paths of length two to connect the end vertices.  
\end{proof}

In a graph $G$ with $n$ vertices, define the graph parameter $t(G)$ to be the maximum $t$ such that $G$ has a vertex subset $T$ of order $t$ with at most $n-t$ edges missing within $T$. The following lemma shows that $t(G)$ and the the clique subdivision number $\sigma(G)$ are close to each other in very dense graphs. 

\begin{lem}\label{subdp}
Every graph $G$ on $n$ vertices with minimum degree $n-\Delta-1$ satisfies 
$$t(G) -\Delta \leq \sigma(G) \leq t(G).$$ 
Furthermore, we can give an upper bound for the maximum missing degree $\Delta$ in terms of an upper bound for the clique subdivision number. If $\sigma(G)<t \leq n$, then $$\Delta \leq \frac{2(n-t)(n-1)}{4(n-1)+t(t-1)}.$$
\end{lem}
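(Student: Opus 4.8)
The plan is to read the two-sided estimate $t(G)-\Delta\le\sigma(G)\le t(G)$ directly off the definitions together with Lemma~\ref{simplem}, and then to extract the bound on $\Delta$ from the lower half of this estimate by a counting (averaging) argument over $t$-vertex subsets. For the inequality $\sigma(G)\le t(G)$: take any $K_h$-subdivision in $G$ and let $T$ be its set of $h$ branch vertices. Every pair of vertices of $T$ that is non-adjacent in $G$ is joined in the subdivision by a path of length at least two, hence containing at least one internal vertex; since the connecting paths of a subdivision are internally vertex disjoint and avoid all branch vertices, distinct missing edges of $T$ are charged to distinct vertices of $V(G)\setminus T$. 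Hence $T$ has at most $n-h$ missing edges, so $T$ witnesses $t(G)\ge h$; taking $h=\sigma(G)$ gives the upper estimate.

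For $\sigma(G)\ge t(G)-\Delta$: let $T_0$ be a witness for $t:=t(G)$, so $|T_0|=t$ and $T_0$ has at most $n-t$ missing edges. I would repeatedly delete a vertex of maximum missing degree inside the current set. As long as a missing edge remains, the deleted vertex destroys at least one of them, so the difference between the number of missing edges still present and the number of deletions performed so far decreases by at least $2$ at each step. Starting from a value at most $n-t$, after $\Delta$ deletions either we have already obtained a clique of order at least $t-\Delta$ (trivially a $K_{t-\Delta}$-subdivision), or we have a set $T'$ of order $t-\Delta$ with at most $n-t-\Delta=n-(t-\Delta)-2\Delta$ missing edges, to which Lemma~\ref{simplem} applies (its hypothesis $\delta(G)=n-\Delta-1$ holds) and produces a $K_{t-\Delta}$-subdivision with $T'$ as branch vertices. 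Either way $\sigma(G)\ge t-\Delta$; if $\Delta\ge t$ the inequality is trivial.

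For the ``furthermore'' part, assume $\sigma(G)<t\le n$. By the contrapositive of Lemma~\ref{simplem}, no $t$-subset of $V(G)$ has at most $n-t-2\Delta$ missing edges, so every $t$-subset has more than $n-t-2\Delta$ of them. Averaging over all $\binom{n}{t}$ subsets, and using that a fixed missing edge lies in exactly $\binom{n-2}{t-2}$ of them, the average number of missing edges in a uniformly random $t$-subset equals $m\cdot\tfrac{t(t-1)}{n(n-1)}$, where $m$ denotes the total number of missing edges of $G$; hence $m\cdot\tfrac{t(t-1)}{n(n-1)}>n-t-2\Delta$. Since $\delta(G)=n-\Delta-1$ gives $2m\le n\Delta$, substituting $m\le n\Delta/2$ and clearing denominators turns this into $\Delta\bigl(4(n-1)+t(t-1)\bigr)>2(n-1)(n-t)$, i.e.
\[
\Delta\ \ge\ \frac{2(n-t)(n-1)}{4(n-1)+t(t-1)},
\]
which is the estimate appearing in the statement. (When $n-t-2\Delta<0$ the averaging step is vacuous, but then already $\Delta>\tfrac{n-t}{2}\ge\tfrac{2(n-t)(n-1)}{4(n-1)+t(t-1)}$, so the bound holds in every case. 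Equivalently, in the contrapositive form in which it is used later: if $\Delta\le\tfrac{2(n-t)(n-1)}{4(n-1)+t(t-1)}$, then $G$ has a $K_t$-subdivision.)

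The two halves of the sandwich are bookkeeping once Lemma~\ref{simplem} is in hand; the only thing to watch in the lower bound is that $\Delta$ deletions really suffice, which is precisely the ``decreases by $2$ per step'' accounting. The genuinely load-bearing step is the short reduction in the ``furthermore'' part: passing from ``$G$ very dense and $\sigma(G)<t$'' to ``every $t$-subset is missing many edges'' via Lemma~\ref{simplem}, and then converting that into a clean inequality involving only $\Delta$ by averaging together with the crude count $m\le n\Delta/2$.
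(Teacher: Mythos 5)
Your argument is correct and tracks the paper's proof essentially step for step: the same charging of missing edges of a branch set to internal vertices for $\sigma(G)\le t(G)$, the same deletion of at most $\Delta$ vertices incident to missing edges followed by an application of Lemma~\ref{simplem} for $\sigma(G)\ge t(G)-\Delta$, and the same averaging over $t$-subsets combined with the crude count $m\le n\Delta/2$ for the ``furthermore'' part. One thing worth flagging: you correctly derived $\Delta\ge\frac{2(n-t)(n-1)}{4(n-1)+t(t-1)}$, which is the \emph{reverse} of the inequality $\Delta\le\cdots$ as printed in the lemma statement and at the end of the paper's own proof. This is a sign error in the paper: its own rearrangement of $\frac{\Delta n}{2}\cdot\binom{t}{2}/\binom{n}{2}>n-t-2\Delta$ also yields a lower bound on $\Delta$, and the lemma is subsequently invoked (in the proof of Lemma~\ref{dontstop}) precisely in the form $\Delta\ge\frac{2(n_i-t)(n_i-1)}{4(n_i-1)+t(t-1)}$. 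So your direction is the mathematically correct one and the one that is actually used; you should not describe it as literally ``the estimate appearing in the statement.'' Your handling of the degenerate case $n-t-2\Delta<0$ is a small extra bit of care that the paper omits.
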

\begin{proof}

We first prove the upper bound on $\sigma(G)$. Suppose that $G$ has a $K_{\sigma}$-subdivision, and let $S$ denote the subset of the $\sigma$ end vertices  of this subdivision. For each missing edge within $S$, there is at least one internal  vertex of $V(G) \setminus S$ on the path connecting its end vertices in the subdivision. Since these internal vertices are distinct, and there are $n- \sigma$ vertices not in $S$, the number of edges missing within $S$ is at most $n-\sigma$. It follows that $\sigma(G) \leq t(G)$. 

We next prove the lower bound on $\sigma(G)$. Let $T$ be a vertex subset of $G$ of order $t=t(G)$ with at most $n-t$ edges missing within $T$. Remove  vertices within $T$ one at a time that is an end vertex of at least one missing edge within $T$ until the remaining vertex subset $T'$ is a clique or has cardinality $t-\Delta$. If $T'$ is a clique, then $\sigma(G) \geq |T'| \geq t-\Delta$, and we are done. Otherwise, $T'$ has cardinality $t-\Delta$ and at most $n-t-\Delta=n-(t-\Delta)-2\Delta$ missing edges.  In this case, we can apply Lemma \ref{simplem} and obtain a clique subdivsion with $T'$ the set of end vertices, implying that $\sigma(G) \geq t(G)-\Delta$. 

 We next prove the upper bound on $\Delta$. Since $G$ has maximum missing degree $\Delta$, it misses at most $\frac{\Delta n}{2}$ edges. By averaging over all subsets with $t$ vertices, (which we call a $t$-set for short), there must exists a $t$-set $U$ such that the number of missing edges within $U$ is at most $\frac{\Delta n}{2} \cdot \frac{\binom{t}{2}}{\binom{n}{2}}$. From Lemma \ref{simplem}, if the number of missing edges is at most $n-t-2\Delta$, then $G$ contains a $K_t$-subdivision. Hence, if $G$ does not contain a $K_t$-subdivision, i.e., $\sigma(G)<t$, then $\frac{\Delta n}{2} \cdot \frac{\binom{t}{2}}{\binom{n}{2}} < n-t-2\Delta$. By rearranging, 
 $ \Delta \leq \frac{n-t}{2+\frac{t(t-1)}{2(n-1)}}.$ After simplifying the expression, we obtain $\Delta \leq \frac{2(n-t)(n-1)}{4(n-1)+t(t-1)}$ as in the lemma statement. 
\end{proof}

\subsection{On the number of cliques in very dense graphs}\label{rec2}
Similar to the approach in Section \ref{immersesec}, we will use the peeling process  to reduce the problem of bounding the number of cliques in a graph with no $K_t$-subdivision to the case of dense graphs. Subsection \ref{count2} relates the clique subdivision number of a dense graph to other graph parameters $t(G)$ and $\Delta(G)$ which will be easier to work with. We use these parameters to bound the number of cliques in a dense graph. Specifically, we obtain that the number of cliques in a dense graph $G$ on $n$ vertices with no $K_t$-subdivision is at most $2^{3t+o(t)}n$. A more careful optimization improves the constant factor $3$ in the exponent to $1.817$.

The reason we focus on dense graphs is analogous to that in the clique minor case in \cite{minor}. Because this argument is important, we discuss it here again. By the peeling process described in Section \ref{immersesec}, we will end up with a dense induced  subgraph $G_0$ after a small number of steps. Here the criteria for being dense is that the maximum missing degree $\Delta$ in $G_0$ is $o(t)$. As the number of steps is small, we will show that bounding the number of cliques in the  graph $G$ can be reduced to studying the number of cliques in the dense graph $G_0$ we eventually obtain. We thus reduce the problem to dense graphs, and we study this case here.

The following definition is natural in light of Lemma \ref{subdp}, which helps us bound the number of cliques in a very dense $K_t$-subdivision free graph. 
\begin{dfn}
Let $f(n, x, t)$ be the maximum number of cliques among all graphs with $n$ vertices and every $t$-set misses at least $x$ edges.
\end{dfn}

We now explain the connection between $f(n,x,t)$ and the maximum number of cliques in a very dense graph on $n$ vertices without a $K_t$-subdivision. Since $G_0$ has no $K_t$-subdivision, $\sigma(G_0) \leq t-1$. Lemma \ref{subdp} implies that if $\sigma(G_0) \leq t-1$, then $t(G_0) \leq t -1 + \Delta$. Recall that we are working with dense graph, which means $\Delta = o(t)$, i.e., $t(G_0)$ is asymptotically at most $t$. Thus $t(G_0) \leq (1+o(1)) t$. Every $t(G_0)$-set in $G_0$ misses at least $n-t(G_0)$ edges. We would thus like to prove that, if $|V(G_0)| = n_0$, then $f(n_0, n_0-t, t) \leq 2^{c t + o(t)}$. 
 
The following lemma is the main result in this subsection. It gives a bound on the number of cliques in a very dense graph without a $K_t$-subdivision.
\begin{lem}\label{boundrec}
For $t \leq n \leq 2^{o(t)}$, there is a constant $c$ such that 
\[f(n,n-t,t) \leq 2^{ct+o(t)}.\] 
Furthermore, we can choose $c=3$.
\end{lem}

 We prove this first with $c=3$ and then later discuss how to improve this to $c=1.817$. Equivalently, substituting in $n_0 = Ct$ (where $C$ might depend on $t$), we show 
 \[f(Ct, (C-1)t, t) \leq 2^{ct + o(t)}.\]


It will be convenient to bound a variant of $f(n,x,t)$ which we define next. 
\begin{dfn}
Let $g(m,x,t,d)$ be the maximum number of cliques over all graphs $G$ with $n$ vertices and maximum missing degree $\Delta$ satisfying $n+\Delta \leq m$, $\Delta \leq d$, and every $t$-set, if exists (i.e., in the case $t \leq n$), misses at least $x$ edges. 
\end{dfn}
The next lemma follows trivially from the definition of $g(m,x,t, d)$.

\begin{lem}\label{monotone}
The function $g(m,x,t, d)$ is monotone increasing in $m$, monotone increasing in $t$, monotone decreasing in $x$, and monotone \footnote{Monotone here does not exclude the possibility that the value stays the same.} increasing in $d$.
\end{lem}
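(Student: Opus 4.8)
The plan is to read off all four monotonicity statements from the observation that $g(m,x,t,d)$ is, by definition, the maximum over a certain family of graphs — call it $\mathcal{G}(m,x,t,d)$ — of the number of cliques, a quantity that depends only on the graph and not on the parameters used to single out the family. Here $\mathcal{G}(m,x,t,d)$ consists of those graphs $G$ on some number $n$ of vertices, with maximum missing degree $\Delta$, satisfying the three constraints (i) $n+\Delta\le m$, (ii) $\Delta\le d$, and (iii) every $t$-set of $G$ misses at least $x$ edges. The strategy is to verify that increasing $m$, increasing $d$, decreasing $x$, or increasing $t$ each only enlarges this family; since a maximum of a fixed function taken over a larger (nonempty) family can only be larger, each of these perturbations can only increase $g$.

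For $m$ and $d$ this is immediate: $m$ occurs only in constraint (i), which is relaxed when $m$ grows, and $d$ occurs only in constraint (ii), which is relaxed when $d$ grows, while the remaining constraints are untouched; hence $\mathcal{G}(m,x,t,d)\subseteq\mathcal{G}(m',x,t,d)$ whenever $m\le m'$, and similarly for $d$. For $x$, note that $x$ occurs only in constraint (iii), and ``every $t$-set misses at least $x$ edges'' is a weaker requirement for smaller $x$, so $\mathcal{G}(m,x,t,d)\subseteq\mathcal{G}(m,x',t,d)$ whenever $x'\le x$; this is precisely why $g$ is monotone \emph{decreasing} in $x$.

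The only case that needs a sentence of argument is monotonicity in $t$, since $t$ sits inside constraint (iii) less transparently. The key point I would make is that (iii) for $t$ implies (iii) for $t+1$: any $(t+1)$-set $U$ contains a $t$-set $S$, and every edge missing within $S$ is also missing within $U$, so $U$ misses at least as many edges as $S$, which is at least $x$ by hypothesis; and if $G$ has at most $t$ vertices then both versions of (iii) are vacuous. Hence $\mathcal{G}(m,x,t,d)\subseteq\mathcal{G}(m,x,t+1,d)$ and $g$ is monotone increasing in $t$. There is essentially no obstacle here — the lemma is a bookkeeping statement recording which direction each parameter pushes $g$, so that it can be invoked freely in the recursive estimates of Subsection \ref{rec2}; the one mild subtlety, flagged above, is that for the $t$-direction one must pass from a $t$-set to a $t$-set \emph{contained in} a $(t+1)$-set, not the other way around.
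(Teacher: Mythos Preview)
Your proposal is correct and matches the paper's approach: the paper does not give an explicit proof, stating only that the lemma ``follows trivially from the definition of $g(m,x,t,d)$,'' and your argument is exactly the unpacking of that triviality, including the one small observation needed for monotonicity in $t$.
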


The following lemma establishes a useful recursive bound. 

\begin{lem}\label{glemma}
For each $m,x,t,d$, there is $\Delta_1 \leq d$ such that $$g(m,x,t,d) \leq \sum_{i=0}^{\Delta_1} g(m-\Delta_1-i,x,t,\Delta_1) \leq (\Delta_1+1)g(m-\Delta_1,x,t,\Delta_1).$$
\end{lem}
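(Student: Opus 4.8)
The plan is to pick a graph $G$ that realizes the maximum in the definition of $g(m,x,t,d)$ — so $G$ has $n$ vertices, maximum missing degree $\Delta$ with $n+\Delta\le m$, $\Delta\le d$, every $t$-set of $G$ misses at least $x$ edges, and $G$ has exactly $g(m,x,t,d)$ cliques — and then to set $\Delta_1:=\Delta$, the actual maximum missing degree of $G$. Clearly $\Delta_1\le d$. The idea is to peel off a vertex $v$ of maximum missing degree $\Delta_1$, splitting the cliques of $G$ according to how they interact with $v$ and its non-neighbors, and to bound the count of each part by $g$ evaluated at a smaller first argument. This is exactly the "peeling off a vertex and its non-neighbors" move already used in the proof of Theorem \ref{immerse}, now phrased for the parameter $g$.

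Concretely, I would let $v$ be a vertex of $G$ with missing degree exactly $\Delta_1$, and let $W$ be the set of non-neighbors of $v$ (other than $v$ itself), so $|W|=\Delta_1$. Every clique $K$ of $G$ either avoids $v$, or contains $v$; and if it contains $v$ it is disjoint from $W$. The cliques containing $v$ are in bijection with the cliques of $G[N(v)]$ (append $v$), and $G[N(v)]$ has $n-1-\Delta_1$ vertices, maximum missing degree at most $\Delta_1$, still has every $t$-set missing at least $x$ edges (being an induced subgraph, a $t$-set of $N(v)$ is a $t$-set of $G$), and $(n-1-\Delta_1)+\Delta_1=n-1\le m-1\le m-\Delta_1$... wait — more carefully, I want the bound to match $g(m-\Delta_1-i,\dots)$ for some $i$, so I should iterate the deletion of non-neighbors rather than delete $N(v)$ in one shot. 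The cleaner route is: delete $v$ together with its $\Delta_1$ non-neighbors one at a time; after removing $v$ and $j$ of its non-neighbors we have a graph on $n-1-j$ vertices, and the cliques of $G$ that use $v$ but none of the already-deleted non-neighbors and specifically use exactly the non-neighbors indexed past stage $j$... Rather than belabor indices, the intended statement is that the cliques of $G$ not containing $v$ number at most $g(m-1,x,t,\Delta_1)\le g(m-\Delta_1,x,t,\Delta_1)$ (delete $v$: $n-1$ vertices, same $\Delta$ bound $\Delta_1$, $(n-1)+\Delta_1\le m-1$), while the cliques containing $v$ number the cliques of $G[N(v)]$, at most $g(m-1-\Delta_1,x,t,\Delta_1)$, and one reorganizes the sum $\sum_{i=0}^{\Delta_1}g(m-\Delta_1-i,x,t,\Delta_1)$ to absorb both: using monotonicity in the first argument (Lemma \ref{monotone}), the $i=0$ term dominates the "not containing $v$" count and the $i=\Delta_1$ term dominates the "containing $v$" count, and all intermediate terms are nonnegative, giving the first inequality; the second inequality is then just $g(m-\Delta_1-i,\dots)\le g(m-\Delta_1,\dots)$ for each $i$ by monotonicity, summed over $i=0,\dots,\Delta_1$.

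The main thing to get right — and the step I expect to be the only real subtlety — is the exact bookkeeping of the first argument $m$: I must verify that each induced subgraph arising in the peeling still satisfies its $n'+\Delta'\le m'$ constraint with the claimed $m'$, using that the maximum missing degree can only decrease under taking induced subgraphs (so $\Delta'\le\Delta_1$ throughout) and that the vertex count drops by exactly the number of deleted vertices. The other hypotheses — $\Delta'\le\Delta_1\le d$ and "every $t$-set misses $\ge x$ edges" — are inherited immediately by induced subgraphs, since a $t$-subset of an induced subgraph is a $t$-subset of $G$ and the missing edges within it are the same. Everything else is monotonicity of $g$ and the trivial observation that a clique of $G$ either contains $v$ or doesn't. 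No deep idea is needed; the lemma is a structural bookkeeping step packaging one peeling move so it can be iterated in Lemma \ref{boundrec}.
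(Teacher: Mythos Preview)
Your two-way split (cliques containing $v$ versus not) does not yield the stated bound, and the error is precisely where you invoke monotonicity. Deleting the single vertex $v$ leaves a graph on $n-1$ vertices with maximum missing degree at most $\Delta_1$, so the ``not containing $v$'' count is bounded by $g(m-1,x,t,\Delta_1)$, as you say. But then you claim $g(m-1,x,t,\Delta_1)\le g(m-\Delta_1,x,t,\Delta_1)$, and this is backwards: $g$ is \emph{increasing} in its first argument (Lemma~\ref{monotone}), and $m-1\ge m-\Delta_1$ once $\Delta_1\ge 1$, so the inequality goes the other way. The $i=0$ term of the sum is too small to absorb the ``not containing $v$'' contribution. (Your identification of the ``containing $v$'' count with the $i=\Delta_1$ term has the same defect; it actually matches the $i=1$ term, but that alone would not rescue the argument.)

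The paper's proof fixes this by removing not one vertex but $\Delta_1$ vertices $v_1,\ldots,v_{\Delta_1}$, each chosen of maximum missing degree in the current remainder. Cliques are then partitioned into $\Delta_1+1$ classes: for $i=1,\ldots,\Delta_1$, those containing $v_i$ but none of $v_1,\ldots,v_{i-1}$ (which live in the neighborhood of $v_i$ inside $G_{i-1}$, a graph with $n-i-\Delta_i$ vertices and $n'+\Delta'\le n-i\le m-\Delta_1-i$, giving the $i$th summand), and those avoiding all of $v_1,\ldots,v_{\Delta_1}$ (which live in a graph on $n-\Delta_1$ vertices with $n'+\Delta'\le n\le m-\Delta_1$, giving the $i=0$ summand). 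The point is that to legitimately drop the first argument by $\Delta_1$ in the residual term you must actually delete $\Delta_1$ vertices; a single deletion only buys you a drop of $1$.
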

\begin{proof}
Let $G_0$ be a graph realizing $g(m,x,t,d)$ cliques on $n$ vertices and maximum missing degree $\Delta_1$ with $n+\Delta_1 \leq m$ and $\Delta_1 \leq d$, and every $t$-set misses at least $x$ edges. Let $v_1$ be a vertex of maximum missing degree $\Delta_1$ in $G_0$. Having defined $v_1,\ldots,v_{i-1}$, let  $G_{i-1}$ be the induced subgraph of $G$ formed by deleting $v_1,\ldots,v_{i-1}$. Let $v_i$ be a vertex of $G_{i-1}$ of maximum missing degree, which we denote by $\Delta_{i}$. By the choice of the vertices, we have $d \geq \Delta_1 \geq \Delta_2 \geq \ldots$. 

The cliques containing $v_i$ but no $v_j$ with $j<i$ are, together with $v_i$, the cliques contained in the induced subgraph of $G$ without $v_1,\ldots,v_{i}$ and without the $\Delta_i$ non-neighbors of $v_i$ in $G_{i-1}$. The number of  vertices in this induced subgraph of $G_{i-1}$ is $n-i-\Delta_i$ and the maximum missing degree is at most $\Delta_i$. So the sum of the number of vertices and the maximum missing degree in this induced subgraph is at most $(n-i-\Delta_i)+\Delta_i = n-i \leq m-\Delta_1-i$. Further, it inherits the property that every $t$-set misses at least $x$ edges. Hence, the number of these cliques is at most $g(m-\Delta_1-i,x,t,\Delta_i)$.

Also, the number of cliques not containing $v_1,\ldots,v_{\Delta_1}$ is at most $g(m-\Delta_1,x,t,\Delta_1)$. Therefore, the total number of cliques in $G$ is at most \begin{eqnarray*}g(m,x,t,d) & \leq & g(m-\Delta_1,x,t,\Delta_1)+\sum_{i=1}^{\Delta_1} g(m-\Delta_1-i,x,t,\Delta_i) \\ & \leq & 
\sum_{i=0}^{\Delta_1} g(m-\Delta_1-i,x,t,\Delta_1) \\ & \leq & (\Delta_1+1)g(m-\Delta_1,x,t,\Delta_1),\end{eqnarray*}
where the second and third inequality use the monotonicity properties of $g$. 
\end{proof}

\begin{cor}\label{cor:fg}
There is some positive integer $d \leq n$ such that 
$$f(n,n-t,t) \leq (d+1)g(n,n-t,t,d).$$
\end{cor}

The next lemma gives a lower bound on the maximum missing degree if every $t$-set misses at least $x$ edges. 

\begin{lem}\label{degree}
For any graph on $n \geq t$ vertices such that every $t$-set misses at least $x$ edges, the maximum missing degree $\Delta$ is at least $2nx/t^2$. 
\end{lem}
\begin{proof}
The proof is a simple averaging argument. Since each $t$-set misses at least $x$ edges, by averaging over all $t$-sets, the number of missing edges in the graph is at least $\frac{x}{{t \choose 2}}{n \choose 2} \geq xn^2/t^2$. The maximum missing degree thus satisfies 
 \beq 
\Delta \geq 2\left(xn^2/t^2\right)/n = 2xn/t^2. 
\eeq
\end{proof}

The main idea behind the next lemma is to try iteratively apply Lemma \ref{glemma}, which gives a recursive bound on $g(m,x,t,d)$ until the value of $m$ is at most $t$. 

\begin{lem}
Let $m,x,t,d$ be positive integers and $\Delta=2xm/t^2$. There exists an integer $D$ with $2x/t \leq D \leq d$ such that 
\begin{numcases} 
{g(m, x ,t, d) \leq}  
2^{O(\log^2 \Delta)}
 \left(\prod_{h=D+1}^{\lceil \Delta \rceil}(h+1)^{\frac{t^2}{2xh}}\right)(D+1)^{\frac{t^2}{2x} - \frac{t}{D}}g(t,x,t,D),    &  $\text{if } D \leq \Delta$; \label{nextg} \\
(D+1)^{\frac{m-t}{D}+1}g(t, x, t, D)  ,    & $\text{if } D > \Delta$. \label{nextg2} 
\end{numcases}
\end{lem}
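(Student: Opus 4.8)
The plan is to iterate the recursive bound of Lemma~\ref{glemma}, tracking carefully how the maximum-missing-degree parameter shrinks at each step. Start with the graph realizing $g(m,x,t,d)$, which by Lemma~\ref{glemma} has some maximum missing degree $\Delta_1 \le d$; by Lemma~\ref{degree} applied with $n \approx m$ we have $\Delta_1 \ge 2xm/t^2 = \Delta$, and more generally whenever we have reduced to a graph on $\approx m'$ vertices with every $t$-set missing at least $x$ edges, its maximum missing degree is at least $2xm'/t^2$. So I would run the following loop. At stage $j$ we have a bound $g(m,x,t,d) \le \big(\prod \text{factors so far}\big)\, g(m_j, x, t, D_j)$ where $D_j$ is the current degree cap and $m_j$ is the current value of the size-plus-degree parameter; apply Lemma~\ref{glemma} with $(m_j, x, t, D_j)$ to get a new $\Delta_{j+1} \le D_j$, paying a factor $(\Delta_{j+1}+1)$ and replacing $m_j$ by $m_{j+1} = m_j - \Delta_{j+1}$, and set $D_{j+1} = \Delta_{j+1}$. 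Stop when either $m_j$ drops to $t$ (giving the $g(t,x,t,D)$ term) or the degree cap $D_j$ falls to the floor $2x/t$ forced by Lemma~\ref{degree}, whichever happens first; let $D$ be the final degree cap.

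The bookkeeping is the heart of the argument. The key observation is that the number of consecutive iterations during which the degree cap stays at a given value $h$ (i.e. the number of stages with $\Delta_{j+1} = h$) is at most roughly $\frac{t^2}{2xh}$, because each such step removes $h$ from the size parameter, and by Lemma~\ref{degree} the cap can only stay equal to $h$ while the current $m_j$ satisfies $2x m_j/t^2 \le h$, i.e. $m_j \le h t^2/(2x)$; so at most about $\frac{h t^2/(2x)}{h} = \frac{t^2}{2xh}$ steps at cap value $h$ before $m_j$ forces the cap down. This is exactly why the product $\prod_{h=D+1}^{\lceil\Delta\rceil}(h+1)^{t^2/(2xh)}$ appears: we collect a factor $(h+1)$ for each of the $\approx t^2/(2xh)$ steps spent at cap $h$, for $h$ ranging from $\lceil\Delta\rceil$ down to $D+1$. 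Once the cap reaches $D$, either $D = 2x/t$ (the Lemma~\ref{degree} floor, so $D \le \Delta$ and we are in case~\eqref{nextg}) and we spend the remaining $\approx \frac{t^2}{2x} - \frac{t}{D}$ steps at cap $D$ — this is the $(D+1)^{t^2/(2x) - t/D}$ factor, where the subtracted $t/D$ accounts for the fact that we stop when $m_j$ reaches $t$ rather than $0$, i.e. we skip the last $t/D$ steps — or, in the other regime where $D > \Delta$, the size parameter $m$ was already small enough that the cap never climbs above $\Delta$: we simply apply Lemma~\ref{glemma} repeatedly with cap $D$, each step removing $D$ from the size parameter, needing $\frac{m-t}{D}$ steps plus one for the tail term, giving the factor $(D+1)^{(m-t)/D + 1}$ of case~\eqref{nextg2}. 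The extra $2^{O(\log^2 \Delta)}$ in \eqref{nextg} absorbs rounding: floors and ceilings in the step counts, the discrepancy between $m_j$ and the true vertex count in invoking Lemma~\ref{degree}, and the fact that $\Delta_{j+1}$ need not hit every integer value between $\Delta$ and $D$ — since there are at most $\lceil\Delta\rceil$ distinct cap values and at each we lose at most a bounded multiplicative slack, the total slack is $2^{O(\log \Delta)}$ per value times $O(\log \Delta)$... more carefully one checks it is $\Delta^{O(\log \Delta)} = 2^{O(\log^2\Delta)}$.

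I would organize the write-up as: (i) define the sequence $\Delta_1 \ge \Delta_2 \ge \cdots$ and $m_j$ by repeatedly invoking Lemma~\ref{glemma}, with $D$ the terminal cap; (ii) prove the counting claim that cap value $h$ persists for at most $\lceil t^2/(2xh)\rceil$ stages, using Lemma~\ref{degree} and monotonicity (Lemma~\ref{monotone}) to justify that we may always reduce the cap to the current maximum missing degree; (iii) split into the two cases according to whether the terminal cap $D$ is forced by reaching $m_j = t$ first ($D > \Delta$) or by hitting the Lemma~\ref{degree} floor first ($D \le \Delta$); (iv) multiply the accumulated factors and collect rounding errors into $2^{O(\log^2\Delta)}$. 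The main obstacle I anticipate is (ii) and (iv) together: getting the step-count bound at each cap value to come out to \emph{exactly} $t^2/(2xh)$ rather than a constant-factor-off version, and then making sure all the accumulated constant-and-rounding slack really does fit inside $2^{O(\log^2\Delta)}$ rather than something larger like $2^{O(\Delta)}$ — this requires being disciplined about the fact that there are only $O(\log\Delta)$... no, actually $O(\Delta)$ distinct integer cap values but the \emph{product} of the per-value slacks is what matters, and since each per-value slack is $O(1)^{O(t^2/(2xh))}$ one has to confirm that the $O(1)$'s can be pulled out as $(1+o(1))$-type factors absorbed into the $o(t)$ of the final theorem rather than bloating this lemma; the cleanest route is probably to keep the $2^{O(\log^2\Delta)}$ as a genuine error term that only tracks ceiling-induced $+1$'s in exponents, each contributing a factor $\le \Delta+1$, with at most $O(\log\Delta)$ of them being nontrivial because the geometric decay of $m_j$ means the cap takes only $O(\log\Delta)$ distinct values that contribute more than $O(1)$ steps... which is the delicate point to get right.
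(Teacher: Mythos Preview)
Your outline has the right skeleton --- iterate Lemma~\ref{glemma}, track $\Delta_1 \ge \Delta_2 \ge \cdots$, invoke Lemma~\ref{degree}, stop when the size drops to $t$ --- but the key step is wrong. You claim the number of iterations with cap $h$ is at most $t^2/(2xh)$ because ``$m_j \le h t^2/(2x)$, so at most $\frac{h t^2/(2x)}{h}$ steps''; but that fraction equals $t^2/(2x)$, not $t^2/(2xh)$ --- you have an arithmetic slip that manufactures the extra $h$ in the denominator. And the intended bound is false for a general valid sequence anyway: nothing prevents all the $\Delta_j$ from equaling a single large value $h$, giving $\approx t^2/(2x) - t/h \gg t^2/(2xh)$ steps at that one level. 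Per-level counting cannot upper-bound $\prod_j (\Delta_j+1)$ because the levels are coupled through the total drop $\sum_j \Delta_j$.

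What is missing is an optimization argument. The paper first shows (via a local-adjustment Claim) that the maximum of $\prod_j(\Delta_j+1)$ over all valid sequences is attained with $n_j + \Delta_j = n_{j-1}$ tight, so the $\Delta_j$'s sum to the full drop in $n$. Then, since $(y+1)^{1/y}$ is monotone decreasing, this product is maximized when each $\Delta_j$ is as small as Lemma~\ref{degree} allows. Only in \emph{that} extremal sequence does $\Delta_j = h$ correspond to $n_j \in \big(\tfrac{t^2(h-1)}{2x}, \tfrac{t^2 h}{2x}\big]$, an interval of length $t^2/(2x)$, yielding $\approx t^2/(2xh)$ steps at level $h$ --- the count you wrote down, but now as the conclusion of a maximization rather than a direct bound. (A secondary point: the iteration always stops because $n_L \le t$; there is no separate ``floor'' stopping rule, and $D := \Delta_{L-1} \ge 2x/t$ follows automatically from Lemma~\ref{degree} since $n_{L-1} > t$. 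The cases $D \le \Delta$ versus $D > \Delta$ are simply a dichotomy on the terminal $D$, not two different stopping reasons.)
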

\begin{proof}
We repeatedly apply Lemma \ref{glemma} until the value of $m$ is at most $t$. We start with $n_0 = m$ and $g(m,x,t,d)$ is realized by a graph $G_1$ with $n_1$ vertices and maximum missing degree $\Delta_1$ (so $n_1 + \Delta_1 \leq n_0 = m$). Therefore $g(m,x,t,d) = g(n_1 + \Delta_1, x, t, \Delta_1) \leq (\Delta_1+1) g(n_1, x, t, \Delta_1)$. Let $g(n_1, x, t, \Delta_1)$ be realized by a graph $G_2$ with $n_2$ vertices and maximum missing degree $\Delta_2$ (so $n_2 + \Delta_2 \leq n_1 = m_1$). Therefore $g(n_1,x,t,\Delta_1) = g(n_2 + \Delta_2, x, t, \Delta_2) \leq (\Delta_2+1) g(n_2, x, t, \Delta_2)$. We repeat this process, each time $g(n_j, x, t, \Delta_j)$ is realized by a graph $G_{j+1}$ with $n_{j+1}$ vertices and maximum missing degree $\Delta_{j+1}$. Let $L$ be the first time $n_L \leq t$. We show the following claim.

\begin{claim}
\begin{equation}
g(m,x,t,d)  \leq (\Delta_{L-1}+1)(\Delta_{L-1}+1) g(t,x,t,\Delta_{L-1}). \label{gupp} 
\end{equation}
The set of inequalities are
\begin{align} & d \geq \Delta_1 \geq \Delta_2 \geq \ldots \geq \Delta_L,  \ \ \  n_{L-1} >t,  \ \ \ n_L\leq t;  \nonumber \\
& \text{and for all } j \leq L, n_j + \Delta_j \leq n_{j-1} ,\  \ \ \Delta_j \geq \frac{2n_j x}{t^2}; \label{nd}\\
& \text{In particular,  let $D = \Delta_{L-1}$. Then }  2x/t \leq D \leq d. \nonumber 
\end{align}
\end{claim}
\begin{proof}[Proof of the Claim]
\begin{eqnarray}
g(m,x,t,d) \notag & \leq & (\Delta_1+1)(\Delta_2+1)\cdots (\Delta_{L-2}+1)(\Delta_{L-1}+1)g(n_{L-1},x,t,\Delta_{L-1}) \\ 
\notag & = & (\Delta_1+1)(\Delta_2+1)\cdots(\Delta_{L-2}+1) (\Delta_{L-1}+1)g(n_L + \Delta_L, x, t, \Delta_L) \\
\notag & \leq &(\Delta_1+1)(\Delta_2+1)\cdots (\Delta_{L-2}+1)(\Delta_{L-1}+1) (\Delta_L+1) g(n_L, x, t, \Delta_L) \\
& \leq & (\Delta_1+1)(\Delta_2+1)\cdots(\Delta_{L-2}+1) (\Delta_{L-1}+1)(\Delta_{L-1}+1) g(t,x,t,\Delta_{L-1}),\end{eqnarray}
The last inequality holds because $\Delta_L  \leq \Delta_{L-1}$, $n_L \leq t$, and the monotonicity of $g(m,x,t,d)$.  
 All the inequalities except the last one in (\ref{nd}) is by the definition of the peeling process. The last inequality in (\ref{nd}) is by Lemma \ref{degree}.

If $D=\Delta_{L-1}$, then in particular $D \leq d$, and also $D \geq \frac{2n_{L-1} x}{t^2} \geq \frac{2tx}{t^2} = \frac{2x}{t}$ as $n_{L-1} \geq t$. 
\end{proof}

We are left with obtaining an upper bound for (\ref{gupp}). Fixing $D$, and under the constraints given by the inequalities in (\ref{nd}), we first make the following claim. 

\begin{claim} \label{claim:upbound1}
The maximum of $(\Delta_1+1)(\Delta_2+1)\cdots (\Delta_{L-1}+1)$ is achieved by $n_j + \Delta_j = n_{j-1}$ for all $j \leq L-2$. 
\end{claim}
The proof of this claim is by a standard local adjustment argument, and we leave it to Appendix \ref{sec:addproof} to not distract the reader from the flow of the proof. 

We now optimize the value of $g(m, x, t,d)$ by adjusting the values of $\Delta_i$'s. 

Note that each factor $(\Delta_j+1)$ decreases the $m$ value by $\Delta_j$ until $m = n_0$ drops to $n_{L-2}$, and we can view this as decreasing $m$ by one $\Delta_j$ times, each time giving a factor $(\Delta_j+1)^{1/\Delta_j}$. Since $(y+1)^{1/y}$ is a monotone decreasing function, it follows that (\ref{gupp}) is maximized if, for each $h$, as many of the $\Delta_j$ are no more than $h$. This implies that, given the $\Delta_{j'}$ values which are less than $h$, as many of the $\Delta_j$'s are equal to $h$ as possible to maximize the product. 

Given $n_{L-2}$ and $D$, we want to see how many $\Delta_j$ with $j \leq L-2$ can be equal to $D$. Notice that by Lemma \ref{degree}, $\Delta_j \geq D+1$ whenever $\frac{2n_jx}{t^2} > D$. This means $\Delta_j = D$ only when $n_j \leq \max(\frac{t^2D}{2x}, m)$. Recall from the lemma statement that $\Delta:= \frac{2xm}{t^2}$. We have two cases to consider, depending on whether or not $D>\Delta$. 

In the first case, $D >\Delta$, in which case $\frac{t^2D}{2x}> m$, we only need factors of the form $(D+1)$ and they appear $(m-n_{L-2})/D$ times for $j \leq L-2$. Since $n_{L-2} \geq n_{L-1} + D \geq t+D$,  (\ref{gupp}) is bounded above by 
\begin{align}  (D+1)^{(m-n_{L-2})/D} (D+1) (D+1) g(t, x, t, D)  
\leq & (D+1)^{\frac{m-(t+D)}{D}} (D+1) (D+1) g(t, x, t, D) \nonumber \\
= & (D+1)^{\frac{m-t}{D}+1}g(t, x, t, D) . \label{ineqcase1}
\end{align}

 In the other case, $D \leq \Delta$. We have the following claim.
\begin{claim}\label{claim:DlessDelta}
When  $D \leq \Delta$,
\[(\Delta_1+1)(\Delta_2+1)\cdots (\Delta_{L-2}+1)(\Delta_{L-1}+1) 
\leq 
 2^{O(\log^2 \Delta)} \left(\prod_{h=D+1}^{\lceil \Delta \rceil}(h+1)^{(\frac{t^2}{2x})/h}\right)(D+1)^{\frac{t^2}{2x} - \frac{t}{D}}.
 \]
\end{claim}
Again, the proof is by some standard but technical optimization over the $\Delta_i$'s. We leave the proof to Appendix \ref{sec:addproof}.

Hence, from inequality (\ref{gupp}) we obtain (\ref{nextg}), which completes the proof. 
\end{proof}

Note that the graph realizing $g(t,x,t,D)$ has at most $t$ vertices and each vertex has missing degree at most $D$. The following lemma therefore gives a bound on $g(t,x,t,D)$. 

\begin{lem}\label{boundt}
Let $D$ be a positive integer. If $H=(V,E)$ is a graph on $t$ vertices with at least $x$ missing edges and each vertex has degree at most $D$ in the complement, then the clique number of $H$ is at most $t - \frac{x}{D}$, and the number of cliques in $H$ is at most $2^{t-\frac{x}{D}}\left(1+2^{-1/D} \right)^{\frac{x}{D}}$. 
\end{lem}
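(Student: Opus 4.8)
The plan is to exploit the bounded complement degree $D$ to find a large independent set in the complement $\bar H$ (equivalently a large clique in $H$), and then count cliques by fixing which of the "bad" vertices are used. First I would bound the clique number: since $\bar H$ has $t$ vertices, maximum degree $\le D$, and at least $x$ edges, a greedy argument (repeatedly delete a vertex incident to a remaining edge of $\bar H$ together with — more efficiently — just delete one endpoint of each such edge) shows that after removing at most $x/D$ vertices no edges of $\bar H$ remain; indeed each deletion of a vertex of complement-degree $\ge 1$ and $\le D$ destroys between $1$ and $D$ of the at-most-$x$-but-we-only-need-an-upper-count... more cleanly: each vertex removed kills at least one missing edge but a vertex can only be "charged" for the $\le D$ missing edges at it, so we must remove at least $x/D$ vertices to kill all $x$ missing edges — wait, that is a lower bound. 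The correct direction: order the vertices and greedily build an independent set in $\bar H$; a standard Turán-type / Caro–Wei bound gives an independent set of size $\ge t/(D+1)$, but we want the stronger $t - x/D$. For that, note the number of missing edges is $\le tD/2$, and removing the $\le x/D$... Let me restate: remove one endpoint from each missing edge one at a time; a vertex of current complement-degree $\delta\ge 1$, when removed, eliminates $\delta\le D$ missing edges, so to eliminate all $x$ missing edges we remove at least $\lceil x/D\rceil$ vertices — again a lower bound on removals. So this greedy does not obviously give the claimed clique bound. The clean argument: the complement $\bar H$ has average degree $2|E(\bar H)|/t \ge 2x/t$, and... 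I would instead argue directly: since every vertex of $\bar H$ has degree $\le D$, we can properly color $\bar H$ with $D+1$ colors, so the largest color class (an independent set in $\bar H$, i.e. clique in $H$) has size $\ge t/(D+1)$ — still not $t-x/D$. The bound $\omega(H)\le t - x/D$ is an \emph{upper} bound on the clique number, which is easy: a clique in $H$ is an independent set $I$ in $\bar H$, so all $\ge x$ missing edges have an endpoint in $V\setminus I$, and each such vertex has $\le D$ missing edges, giving $x \le D|V\setminus I| = D(t-|I|)$, hence $|I|\le t-x/D$. Good — that settles the first claim.

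Next I would count the cliques of $H$. Partition the count by the number $j$ of vertices of the clique lying outside a fixed maximum clique... no — better, I would use the crude but effective bound: a clique of $H$ is an independent set of $\bar H$; enumerate cliques by choosing a subset $W$ of the "non-isolated-in-$\bar H$" structure. Concretely, let $B$ be the set of vertices incident to at least one missing edge; then $|B|$ missing-edge endpoints, and since there are $\ge x$ missing edges each contributing $\le$ (its two endpoints) but each endpoint absorbs $\le D$ edges, we get $|B| \ge 2x/D$ ... hmm, or more usefully a clique uses at most one endpoint of each missing edge. The approach I actually expect the authors take: every clique of $H$ is obtained by taking a maximal independent set of $\bar H$ and a subset; by the $\omega(H)\le t-x/D$ bound and a refinement, the number of cliques is at most $2^{\,\omega(H)}$ times a correction. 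I would set it up as: enumerate cliques $Q$ of $H$; $Q$ is an independent set in $\bar H$. Each of the $\ge x$ missing edges is hit (has an endpoint) in $V\setminus Q$. Encode $Q$ by first choosing $V\setminus Q$ restricted to $B$ and then $Q\cap(V\setminus B)$ freely — vertices outside $B$ are isolated in $\bar H$ and may be included arbitrarily. Writing $t = |B| + (t-|B|)$, the count is at most $2^{t-|B|}\cdot(\text{number of independent sets of }\bar H[B])$. Then bound the number of independent sets of a graph on $|B|$ vertices with at least $x$ edges and max degree $\le D$ by $2^{|B|-x/D}(1+2^{-1/D})^{x/D}$ — this is the genuine combinatorial heart.

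For that heart I would use the \emph{edge-deletion / vertex-splitting recursion} à la Kahn–Zhao: the number $i(\bar H[B])$ of independent sets satisfies, for a vertex $v$ of degree $\delta\in[1,D]$, $i(\bar H) = i(\bar H - v) + i(\bar H - N[v])$, and one shows by an entropy or direct inductive argument that $i(G) \le \prod_{uv\in E}(2^{d_u}+2^{d_v})^{1/(\text{something})}$... Actually the clean tractable route: prove $i(G)\le 2^{n - m/D}(1+2^{-1/D})^{m/D}$ for every graph $G$ with $n$ vertices, $m$ edges, max degree $\le D$, by induction on $n$. Pick a vertex $v$ of degree $\delta\ge1$; then $i(G) \le i(G-v) + i(G - N[v])$. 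We have $G-v$ has $n-1$ vertices and $m-\delta$ edges, and $G-N[v]$ has $\le n-2$ vertices and $\le m-\delta$ edges. Plugging the inductive bound and checking that $2^{-1}+2^{-2}\cdot(\text{slack}) \le $ the recursion closes when $\delta=D$ (the worst case, since the function $(1+2^{-1/D})$ and the exponents are tuned to equality there) is the computation I would carry out carefully; the case $\delta<D$ should follow because deleting $v$ then "wastes" less edge-budget and the recursion has slack. The main obstacle I anticipate is exactly this: verifying the induction closes for all $1\le\delta\le D$, i.e. that the constant $(1+2^{-1/D})$ was chosen so that the worst case is a single vertex of degree $D$ whose neighborhood is otherwise independent, and handling the accounting of vertices vs. edges removed ($N[v]$ removes $\delta+1$ vertices but possibly many more than $\delta$ edges, which only helps). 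Combining $2^{t-|B|}\cdot 2^{|B|-x/D}(1+2^{-1/D})^{x/D} = 2^{t-x/D}(1+2^{-1/D})^{x/D}$ then gives the claimed bound, since $|B|\le t$ and the edge count within $B$ is at least $x$.
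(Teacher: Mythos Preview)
Your clique-number argument is correct and in fact cleaner than the paper's (which reaches the same inequality $x\le D(t-\omega)$ via a more elaborate accounting over a fixed maximum clique $W$, separately tracking missing edges from $V\setminus W$ into $W$ and within $V\setminus W$). For the clique count the two routes genuinely diverge. The paper fixes a maximum clique $W$ of size $\omega$, writes every clique as a clique in $V\setminus W$ together with a subset of its common neighborhood in $W$, invokes the pigeonhole bound $|S_{i_1}\cup\cdots\cup S_{i_j}|\ge (\sum_k|S_{i_k}|)/D$ (valid since each vertex of $W$ lies in at most $D$ of the $S_i$), obtains the closed form $2^{\omega}\prod_i(1+2^{-|S_i|/D})$, and then optimizes over the $|S_i|$ via Jensen. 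You instead peel off the universal vertices and reduce to a standalone lemma: any graph with $n$ vertices, $m$ edges and maximum degree $\le D$ has at most $2^{\,n-m/D}(1+2^{-1/D})^{m/D}$ independent sets, to be proved by the recursion $i(G)=i(G-v)+i(G-N[v])$. The paper's argument is more direct and avoids a separate induction; yours is more modular and gives a reusable independent-set bound.

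One correction to your sketch: you write that $G-N[v]$ losing ``many more than $\delta$ edges \dots\ only helps,'' but this is backwards. Writing the target as $2^n c^{m/D}$ with $c=(1+2^{-1/D})/2<1$, the inductive bound on $i(G-N[v])$ is $2^{\,n-1-\delta}c^{m'/D}$, which is \emph{larger} when $m'$ is smaller; so heavy edge-removal is the worst case, not the best. The induction nevertheless closes: one has $m-m'\le \sum_{w\in N(v)}\deg(w)\le \delta D$, and the required inequality becomes $\tfrac12\,c^{-\delta/D}+\tfrac12\,(2c)^{-\delta}\le 1$. The left side is convex in $\delta$, equals $1$ at $\delta=0$, and is at most $1$ at $\delta=D$ (this last check reduces to $(1+2^{-1/D})^{D-1}\ge 2^{\,1/D-1}$, with equality at $D=1$ and strict inequality for $D\ge 2$), hence is $\le 1$ on all of $[0,D]$. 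With that fix your argument goes through.
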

\begin{proof}

If $D = 1$, then the result is trivial and sharp since the graph in this case is the complement of a matching with $x$ edges. Now suppose $D>1$. 
We first bound the clique number of $H$. Let $W$ be a maximum clique in $H$, and $\omega$ denote the order of $W$.  Suppose there are $q$ missing edges between $W$ and $V \setminus W$, and $l$ missing edges within $V \setminus W$. Those are all the missing edges since $W$ is a clique. It follows that $q+l \geq x$.
Let $k = t-\omega = |V\setminus W|$. 
Let $v_1, \dots, v_k$ be the vertices in $V \setminus W$ and $S_i$ be the set of vertices in $W$ not adjacent to $v_i$. Thus $1 \leq |S_i| \leq D$ for each $i$, where the upper bound is by the degree restriction and the lower bound is because $v_i$ is not complete to $W$ as $W$ is a maximum clique. As each vertex $v_i \in V \setminus W$ is nonadjacent to at most $D$ other vertices and so at most $D-|S_i|$ other vertices in $V \setminus W$, then by double counting, we have $l \leq  \frac{1}{2} \sum_{i=1}^k(D- |S_i|)$. We also have $q =\sum_{i=1}^k |S_i|$. Since $q + l \geq x$ we have that 
\[ x \leq \frac{1}{2} \sum_{i=1}^k(D- |S_i|) + \sum_{i=1}^k |S_i|,\] which implies 
$x \leq \frac{1}{2}\sum_{i=1}^k \left(D+|S_i|\right)$.
Since $1\leq |S_i| \leq D$, we have $x \leq kD$. Hence, $\omega = t- k \leq t - \frac{x}{D}$. 

The cliques in $H$ can be enumerated as follows. For each clique $\{v_{i_1}, v_{i_2}, \dots, v_{i_j} \}$ in $V \setminus W$, let $N$ denote its common neighborhood in $W$; the number of cliques in $H$ containing the chosen clique in $V \setminus W$ is $2^{|N|}$ given by adding any subset of $N$. Notice that $N$ has size $|W| - |S_{i_1} \cup \dots \cup S_{i_j}| = \omega  - |S_{i_1} \cup \dots \cup S_{i_j}|.$
Therefore, the number of cliques in $H$ is
\beq \sum_{v_{i_1}, \dots, v_{i_j} \text{ forms a clique in } V \setminus W}\!\!\!\!
\!\!\!\! \!\!\!\!
\!\!\!\! \!\!\!\!
\!\!\!\!  2^{\omega - |S_{i_1} \cup S_{i_2} \dots \cup S_{i_j}|}. \label{cliquebd1}
\eeq

Since each vertex in $W$ has at most $D$ non-neighbors in $V \setminus W$, we have 
\beq |S_{i_1} \cup S_{i_2} \dots \cup S_{i_j}| \geq \left(|S_{i_1}| + \dots + |S_{i_j}|\right) / D. \label{boundsize} \eeq
Substituting into (\ref{cliquebd1}), the number of cliques in $H$ is 
\beq 
\sum_{v_{i_1}, \dots, v_{i_j} \text{ forms a clique in } V \setminus W} \!\!\!\!
\!\!\!\! \!\!\!\!
\!\!\!\! \!\!\!\!
\!\!\!\! \!\!2^{\omega - |S_{i_1} \cup S_{i_2} \dots \cup S_{i_j}|} 
 \leq  \! \! \!  \! \! \!\sum_{v_{i_1}, \dots, v_{i_j} \in V \setminus W} \!\!\!\! \!
\!\!\!\! \!\!\!\! 2^{\omega - \left(|S_{i_1}| + |S_{i_2}| +  \dots +| S_{i_j}|\right)/D}  
= 2^\omega \prod_{i=1}^{t - \omega} (2^{-|S_i|/D}+1). \label{averageclique}
\eeq

To maximize (\ref{averageclique}), we fix $\omega$ first, and notice that the number of missing edges between $V \setminus W$ and $W$ is $\sum |S_i| = q.$
We want to maximize $ \prod_{i=1}^{t - \omega} (2^{-|S_i|/D}+1)$ under the constraint that $1 \leq |S_i| \leq D$ and $\sum_{i=1}^{t-\omega} |S_i| = q$. Note that the function $F(y)=\ln(2^{-y}+1)$ is convex. Letting $y_i=|S_i|$, we wish to maximize $\prod_{i}(2^{-y_i}+1)=e^{\sum_{i} \ln(2^{-y_i}+1)}$ 
 constrained to $\sum_i y_i=q$ and $1 \leq y_i \leq D$. From Jensen's inequality, it follows that  $ \prod_{i=1}^{t - \omega} (2^{-|S_i|/D}+1) \leq  (2^{-1/D}+1)^a(2^{-D/D}+1)^b$ where 
$a+b=t-\omega$ and $a+bD=q$. We thus obtain $b=(q-t+\omega)/(D-1)$ and $a= (t-\omega)\frac{D}{D-1}-\frac{q}{D-1}$.

Hence, the number of cliques in $H$ is at most 
\begin{align}
2^{\omega}(2^{-1/D}+1)^a(3/2)^b = 2^{\omega}(1+2^{-1/D})^{ (t-\omega)\frac{D}{D-1}-\frac{q}{D-1}}(3/2)^{(q-t+\omega)/(D-1)}. \label{final}
\end{align}

The right hand side of (\ref{final}) is an increasing function of $\omega$ and a decreasing function of $q$. We already know that $\omega \leq t-\frac{x}{D}$. Furthermore, $q \geq t-\omega$ as each vertex in $V \setminus W$ is not adjacent to at least one vertex in the clique $W$ since $W$ is the largest clique. Therefore $q \geq t  - (t - \frac{x}{D}) = \frac{x}{D}$. So substituting in these values gives us $a=x/D$ and $b= 0$, and so the number of cliques in $H$ is at most 
$2^{t - \frac{x}{D}}  \left(1 + 2^{-1/D} \right)^{x/D}$.
\end{proof}

Combining (\ref{nextg}) and Lemma \ref{boundt}, the following lemma is a direct consequence. 

\begin{lem}
There is an integer $D$ with  $2x/t \leq D \leq d$ such that  
\begin{numcases} 
{g(m, x ,t, d) \leq}  
2^{O(\log^2 \Delta)}
 \left(\prod_{h=D+1}^{\lceil \Delta \rceil}(h+1)^{\frac{t^2}{2xh}}\right)(D+1)^{\frac{t^2}{2x} - \frac{t}{D}}2^{t - \frac{x}{D}}  \left( 1 + 2^{-1/D} \right)^{x/D}    &  $\text{if } D \leq \Delta$,  \ \ \ \ \ \ \  \ \ \ \label{whole1}\\
(D+1)^{\frac{m-t}{D}+1}2^{t - \frac{x}{D}}  \left( 1 + 2^{-1/D} \right)^{x/D}    & $\text{if } D > \Delta$,  \label{whole2}
\end{numcases}
where we recall $\Delta=2xm/t^2$.
\end{lem}

We now have a decent bound on $g(m,x,t,d)$, which we will use to bound $f(n, x, t)$. Recall that $f(n, x, t)$ is what we really want to bound, and the bound we desire is Lemma \ref{boundrec}, which is the main lemma in this subsection. We copy the statement of Lemma \ref{boundrec} for the convenience of the reader. 

\begin{lem*}\nonumber
For $t \leq n \leq 2^{o(t)}$, we have 
\[f(n,n-t,t) \leq 2^{3t+o(t)}.\] 
\end{lem*}

We bound the function $f$ using the function $g$ in the proof.
 
\begin{proof}
Let $n=Ct$ (where $C$ might depend on $t$), $x=n-t=(C-1)t$, and $G$ be a graph on $n$ vertices and maximum missing degree $d$ such that every $t$-set misses at least $x$ edges. Then $2x/t=2(C-1)$ and $\Delta=2xn/t^2=2C(C-1)$. 
Our goal here is to prove
\[
f(Ct, (C-1)t, t) \leq 2^{3t + o(t)}.
\]
Corollary \ref{cor:fg} states that 
$$f(n,n-t,t) \leq (d+1)g(n,n-t,t,d).$$ 
Using this, we want to maximize the logarithm of the bound in (\ref{whole1}) and (\ref{whole2}).
Since $d \leq n = 2^{o(t)}$, thus $\log d  = o(t)$. So we just need to bound $$\log f(n,n-t,t) \leq  \log (d+1) + \log g(n,n-t,t,d) = o(t) + \log g(n, n-t, t,d).$$ It remains to bound $\log g(n, n-t, t,d)$), which we do by a direct application of (\ref{whole1}) and (\ref{whole2}) with a suitable substitution. 

We first bound the logarithm of (\ref{whole2}) over $D \geq \Delta = 2C (C-1)$. Equivalently, we want to bound the logarithm of 
\[ (D+1)^{\frac{(C-1)t}{D}+1}2^{t - \frac{(C-1)t}{D}}  \left( 1 + 2^{-1/D} \right)^{(C-1)t/D}.\]
\begin{claim}\label{claim:case1largeD}
When $D \geq \Delta = 2C (C-1)$, $$f(Ct,(C-1)t,t) \leq 2^{1.64t + o(t)}.$$ 
\end{claim}
It is equivalent to show that the logarithm of (\ref{whole2}) is at most $1.64t + o(t)$. The proof uses standard optimization analysis and we leave the proof to Appendix \ref{sec:addproof}.

We now bound (\ref{whole1}) in the case that $D \leq 2C(C-1)$, as given by the following claim.
\begin{claim}\label{claim:case1smallD}
When $D \leq \Delta = 2C (C-1)$, $$f(Ct,(C-1)t,t) \leq 2^{3t + o(t)}.$$ 
\end{claim}

By (\ref{whole2}), we have (ignoring the $o(t)$ term on the right hand side)
\begin{align}
\log_2 f(Ct, (C-1)t, t) \leq \frac{t^2}{2x} \sum_{y = D+1}^{\lceil \Delta \rceil} \frac{1}{y} \log_2(y+1) + \left(\frac{t^2}{2x}-\frac{t}{D}\right)\log_2(D+1) + t-(1-\log_2 (1 + 2^{-\frac{1}{D}}))\frac{x}{D}.  \label{form1}
\end{align}
It suffices to show that $3t + o(t)$ is an upper bound for the right hand side of (\ref{form1}). A simple way to provide an upper bound is to bound each of the summands above. For the term $\sum_{y = D+1}^{\lceil \Delta \rceil} \frac{1}{y} \log_2(y+1)$ in the first summand, we treat it as a Riemann sum and can thus upper bound  it by an integral.  The rigorous proof of this claim is technical while standard, and is in Appendix \ref{sec:addproof}.

Combining Claims \ref{claim:case1largeD} and \ref{claim:case1smallD}, we complete the proof of Lemma \ref{boundrec}.
\end{proof}

\subsection{Proof of Theorem \ref{main2}}\label{fin2}
Let $G$ be a graph on $n$ vertices with no $K_t$-subdivsion. We apply the peeling process described in Subsection \ref{finimmersion} to the graph $G$. We bound the number of cliques by this peeling process. Let $K$ be a clique of $G$ on $s$ vertices. Let $n_i$ denote the number of vertices in the graph $G_i$ obtained in the peeling process. Let $r=r(K)$ be the least positive integer such that $n_r \leq 1.05t$ or $n_{r+1} \geq n_{r}-n_r^{0.55}$ or $r=s$. 

We first give a bound on $r$. The result of \cite{KS} and \cite{BT} implies, as $G$ does not contain a $K_t$-subdivision, that every subgraph has a vertex of degree at most $d:=t^2$. (here we assume $t$ is sufficiently large).  Hence, $n_1 \leq d+1 = t^2 + 1$. 

Notice that when $n_i$ is large, as $G_i$ does not contain any $K_t$-subdivision, the bound of $\Delta$ in Lemma \ref{subdp} implies that the maximum missing degree in $G_i$ has to be large, and thus by peeling off the vertex with the maximum missing degree, $n_{i+1}$ is much smaller than $n_i$. More specifically, we have the following lemma. 
\begin{lem} \label{dontstop}
If $n_i \geq t^{5/3}$ and $i < s$, then $n_{i} - n_{i+1} \geq n_i^{0.55}$. That is, the procedure does not stop before $n_i$ drops down to $ t^{5/3}$ unless $i =s$.
\end{lem}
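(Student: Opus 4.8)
The plan is to show that a \emph{single} step of the peeling process already deletes more than $n_i^{0.55}$ vertices whenever $n_i \ge t^{5/3}$; the ``does not stop'' reformulation then follows immediately from the definition of $r(K)$. The two inputs are the structure of one peeling step and the fact that the absence of a $K_t$-subdivision forces the maximum missing degree of $G_i$ to be large.

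First I would unwind one peeling step. Since $G_i$ is an induced subgraph of $G$, it has no $K_t$-subdivision, so $\sigma(G_i) \le t-1$, and moreover $n_i \ge t^{5/3} > t$. The vertex $v_i$ picked at stage $i$ has minimum degree in $G_i$, equivalently maximum missing degree; writing $\Delta_i$ for the maximum missing degree of $G_i$, passing from $G_i$ to $G_{i+1}$ (which exists since $i < s$) deletes $v_i$ and its $\Delta_i$ non-neighbors in $G_i$, together with possibly further low-degree vertices, so $n_i - n_{i+1} \ge 1 + \Delta_i$. It therefore suffices to prove $\Delta_i \ge n_i^{0.55}$.

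Next I would lower bound $\Delta_i$ by averaging. Since $G_i$ has maximum missing degree $\Delta_i$, it misses at most $\Delta_i n_i/2$ edges, so averaging over all $t$-subsets produces a $t$-set of $G_i$ missing at most $\tfrac{\Delta_i n_i}{2}\binom{t}{2}/\binom{n_i}{2} = \tfrac{\Delta_i t(t-1)}{2(n_i-1)}$ edges. As $G_i$ has no $K_t$-subdivision, the contrapositive of Lemma~\ref{simplem} (applied with $\Delta = \Delta_i$, so that $G_i$ has minimum degree $n_i - \Delta_i - 1$) forces this quantity to exceed $n_i - t - 2\Delta_i$; rearranging gives $\Delta_i \ge (n_i - t)/\left(2 + \tfrac{t(t-1)}{2(n_i-1)}\right)$, which is exactly the lower bound on the maximum missing degree recorded in Lemma~\ref{subdp}. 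Now I would substitute $t \le n_i^{3/5}$: the numerator is $\ge n_i - n_i^{3/5} \ge n_i/2$, and the denominator is $\le 2 + \tfrac{n_i^{6/5}}{2(n_i-1)} \le 2 n_i^{1/5}$, both valid once $t$ (hence $n_i$) is large. Hence $\Delta_i \ge n_i^{4/5}/4 \ge n_i^{0.55}$, the last step because $n_i^{4/5 - 0.55} = n_i^{1/4} \ge 4$ for $t$ large. Combined with $n_i - n_{i+1} \ge 1 + \Delta_i$ this proves $n_i - n_{i+1} \ge n_i^{0.55}$; and since $n_i \ge t^{5/3}$ forces $n_i > 1.05 t$ while we have just shown $n_{i+1} < n_i - n_i^{0.55}$, no clause of the definition of $r(K)$ other than $r = s$ can be triggered while $n_i \ge t^{5/3}$.

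The only delicate point is the \emph{direction} of the missing-degree estimate: forbidding a $K_t$-subdivision makes a dense graph sparse enough to have a \emph{large} maximum missing degree, so one must use Lemma~\ref{subdp} (equivalently, the averaging through Lemma~\ref{simplem}) as a lower bound on $\Delta_i$, not an upper bound. Everything after that is a routine comparison of exponents, and the gap between $4/5$ and $0.55$ leaves ample room to absorb the lower-order slack.
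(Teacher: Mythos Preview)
Your proof is correct and follows essentially the same approach as the paper: both arguments use the lower bound on the maximum missing degree coming from Lemma~\ref{subdp} (equivalently, the averaging argument through Lemma~\ref{simplem}) and then compare exponents. Your execution is in fact slightly cleaner than the paper's: you substitute $t \le n_i^{3/5}$ to express everything in terms of $n_i$ and obtain $\Delta_i \ge n_i^{4/5}/4 \ge n_i^{0.55}$ directly, whereas the paper substitutes $n_i \ge t^{5/3}$ into the lower bound to get $\Delta \ge t^{4/3}$ and then (with a small slip in the written inequality $n_i^{0.55} \le (t^{5/3})^{0.55}$) contrasts this with the assumed upper bound $\Delta \le n_i^{0.55}$ for a contradiction.
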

\begin{proof}
If it stops at step $i$ and the graph $G_i$ has at most $t^{5/3}$ vertices, it means that the maximum missing degree in $G_i$ satisfies 
\beq \Delta \leq n_i^{0.55} \leq (t^{5/3})^{0.55}.\label{upperd} \eeq
On the other hand, by Lemma \ref{subdp}, we have $\Delta \geq    \frac{2(n_i-t)(n_i-1)}{4(n_i-1)+t(t-1)}.$ 
Plugging in $n_i \geq t^{5/3}$, we have 
\beq  \Delta \geq   \frac{2(n_i-t)(n_i-1)}{4(n_i-1)+t(t-1)} \geq  \frac{2(t^{5/3}-t)(t^{5/3} -1)}{4(t^{5/3}-1) +t(t-1)} \geq t^{4/3} \text{ when } t \gg 1. \label{lowd} \eeq
However, (\ref{upperd}) and (\ref{lowd}) contradict each other, which completes the proof. 
\end{proof}

We bound the number of steps it takes for the procedure to stop. 
Before dropping down the number of remaining vertices to $ n_1/2$, in each step the number of remaining vertices drops by at least  $(n_1/2)^{0.55}$, so it takes at most $\frac{n_1/2}{(n_1/2)^{0.55}} = (n_1/2)^{0.45}$ steps after picking the first vertex to drop the number of vertices to at most $n_1/2$. Similarly, it takes at most $(n_1/2^2)^{0.45}$ further steps to drop the number of vertices from $n_1/2$ to at most $n_1/4$. Continuing, the number of steps it takes before the procedure stops is at most
\beq  
1+(n_1/2)^{0.45} + (n_1/2^2)^{0.45} +  ( n_1/2^3)^{0.45} + \dots = 1+ ( n_1)^{0.45} \frac{1}{2^{0.45}-1} \leq 3 t^{0.9} := r_0. 
\eeq 

We next give a crude but simple bound on the number of choices for $v_1,\ldots,v_r$. We have we have $n_0=n$ choices for $v_1$. We will use the weak estimate that the number of choices for $v_2,\ldots,v_r$ after having picked $v_1$ is at most 
$${|G_2|-1 \choose \leq r_0} \leq r_0 \binom{t^2}{r_0} \leq r_0 \left(\frac{e t^2}{r_0}\right)^{r_0} \leq  2^{7t^{0.9}\log t} \leq  2^{t^{0.91}}.$$
 Therefore there are at most $n2^{t^{0.91}}$ choices for $v_1,\ldots,v_r$. 

Recall that $G$ has $n$ vertices and no $K_t$-subdivision and our goal is to bound the number of cliques in $G$. We have already bounded the number of choices for the first $r$ vertices, and it suffices to bound the number of choices for the remaining vertices. We split the cliques into three types: those with $n_r \leq 1.05t$, those with $r=s$ and $n_r > 1.05t$, and those with $r<s$ and $n_r>1.05t$ and $n_r - n_{r+1} < n_r^{0.55}$ (and thus $t^{5/3}\geq  n_r$ by Lemma \ref{dontstop}).

We first bound the number of cliques with $n_r \leq 1.05t$. As there are at most $1.05t$ possible remaining vertices to include after picking $v_1,\ldots,v_r$ for the clique, there are at most $2^{1.05t}$ ways to extend these vertices. Thus there are at most $n2^{1.05t+t^{0.91}}$ cliques of the first type. We next bound the number of cliques with $r=s$. We saw that this is at most $n2^{t^{0.91}}$. Finally, we bound the number of cliques with  $t^{5/3}\geq n_r \geq 1.05t$, $r<s$, and $n_{r+1} \geq n_r-n_r^{.55}$. In this case, in $G_r$, $v_r$ has the minimum degree, and $v_r$ and its non-neighbors are not in $G_{r+1}$, which has $n_{r+1}$ vertices. Thus, the complement of $G_r$ has maximum degree $\Delta \leq n_{r}-n_{r+1} \leq n_r^{0.55} \leq (t^{5/3})^{0.55}= t^{11/12} = o(t)$. 

By Lemma \ref{subdp} and $\Delta = o(t)$, we have $\sigma(G) \leq t(G)\leq \sigma(G)  + o(t).$ Therefore, $t(G)$ and $\sigma(G)$ differ by at most $o(t)$.
Since $n_r \leq t^{5/3} = 2^{o(t)}$, we can apply Lemma \ref{boundrec}.  Together with the fact that any $t(G)+1 $ vertices in $G_{r}$ contain at least $n_r - t(G)$ edges, we have $G_r$ has at most $2^{3  t(G)+o(t)} \leq 2^{3(\sigma(G) + o(t)) + o(t)} \leq 2^{3t + o(t)}$ cliques as $\sigma(G) \leq t$. Thus the number of cliques in $G_r$ is at most $2^{3t+o(t)}$. This gives a bound on the number of ways of completing a clique of the last type in $G$ having picked the first $r$ vertices. We thus get at most $2^{3t + o(t) + t^{0.91}}n$ cliques of the last type. Adding up all possible cliques, we get at most $2^{3t+o(t)}n$ cliques in $G$, completing the proof. \qed

From the argument we can see that the main contribution for the dependence on $t$ in the upper bound on the number of cliques in a $K_t$-subdivision free graph on $n$ vertices is from the upper bound on $f(n,n-t,t)$ in Lemma \ref{boundrec}. To improve the exponential constant for the number of cliques, we need to bring down the exponential constant in the upper bound for $f(Ct, (C-1)t, t)$. We used nonoptimal approximations that can be improved. For example, this is done in the proof of Claim \ref{claim:case1smallD} as given in Appendix \ref{sec:addproof}, which is part of the proof of Lemma \ref{boundrec}, which as stated gives the constant factor $3$ in the exponent. However, as is clear from the proof,  such as when finding an upper bound for (\ref{form1}), during the computation in (\ref{plugd}), every step we use an estimation; and one can improve them by using higher degree Taylor expansion approximations. Also, step (\ref{plugd}) naively choose the value for $D$ individually for different summands. The presentation is made this way for clarity, but the bound can be improved by more carefully bounding all the summands together. 

Indeed, with the aid of a computer, we can directly find the maximum value for (\ref{form1}) instead of going through multiple steps of crude estimation in (\ref{plugd}). In this way, we can improve the constant in the exponent down to $1.817$. The Taylor expansion computation is involved and tedious and thus we omit it here. Instead, we provide our Python code and output in Appendix \ref{subsec:816} for this optimization. In summary, by either Taylor expansion computation or computer-aided proof, the number of cliques in a graph with $n$ vertices and no $K_t$-subdivision is at most $n 2^{1.817t + o(t)}$. 

\section{Concluding remarks} 

We determined the number of cliques in a graph on $n$ vertices with no $K_t$-immersion up to a factor $O(t^{\log t})$. It would be interesting to determine the exact value, which we conjecture for $n \geq t-2$ to be $2^{t-2}(n-t+3)$ as given by the construction described in the introduction. One approach toward improving the upper bound is to observe that the upper bound proof using the peeling process shows that in an extremal graph, there is a clique of order $t-O(\log^2 t)$. It would be interesting to try to understand the adjacencies from the remaining vertices using the fact that the graph has no $K_t$-immersion. 

We have improved the upper bound on the number of cliques in a graph on $n$ vertices with no $K_t$-subdivision. We were not able to determine the best exponential constant. If $n$ is even and just less than $4t/3$, then the complement of a perfect matching on $n$ vertices has $3^{n/2}$ cliques (getting a factor $3$ for each of the $n/2$ missing edges) and has no $K_t$-subdivision. Taking disjoint unions of such graphs, we get for $n \geq 4t/3$ that there is a graph on $n$ vertices with no $K_t$-subdivision which has $\Omega(n3^{2t/3}/t)$ cliques. We think this construction is essentially optimal  as we proved for minors in \cite{minor}, which is stated as the conjecture below. 

\begin{conj}\label{conj1}
The number of cliques in a graph $G$ with $n$ vertices and no $K_t$-subdivision is at most $n 3^{2t/3+o(t)}$, which would be optimal for $n \geq 4t/3$. 
\end{conj}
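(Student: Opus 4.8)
We sketch a possible line of attack on Conjecture \ref{conj1}. The matching lower bound is the construction in the concluding remarks, so the content is the upper bound $n\,3^{2t/3+o(t)}$. The plan is to keep the two–phase skeleton of the proof of Theorem \ref{main2} — peel down to a very dense induced subgraph $G_0$, then count cliques in $G_0$ — but to make every estimate in the dense phase tight on the extremal configuration, which should be the complement of a perfect matching on $\approx 4t/3$ vertices (equivalently, in the notation of Section \ref{subsec}, $n_0\approx 4t/3$, maximum missing degree $D=1$, and essentially no deep recursion). The peeling phase already costs only a $2^{o(t)}$ factor, so the whole problem is to show $f(Ct,(C-1)t,t)\le 3^{2t/3+o(t)}$ for every relevant $C$; that is, to replace the constant $3$ in Lemma \ref{boundrec} (and the $1.816$ of the optimized version) by $\tfrac23\log_2 3$.

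Concretely I would proceed as follows. (1) Check that the base estimate of Lemma \ref{boundt} is already tight precisely when the complement is a matching ($D=1$), where it gives $2^{t-2x}3^{x}$ cliques and $\omega=t-x$ — so no slack is lost there, and any improvement must come from the recursion, not the small-graph count. (2) Replace the crude step $\sum_{i=0}^{\Delta_1} g(m-\Delta_1-i,x,t,\Delta_1)\le(\Delta_1+1)g(m-\Delta_1,x,t,\Delta_1)$ in Lemma \ref{glemma} by an argument that keeps the sum and exploits that the summands decay geometrically in $i$ (for the extremal complement-of-matching graph the ratio of consecutive $g$'s is $3^{-1/2}$, not $1$); this is most cleanly done by passing to a generating-function or potential-function formulation of the recursion in (\ref{gupp}) and solving it, rather than bounding each factor $(\Delta_j+1)^{1/\Delta_j}$ separately as in (\ref{ineqcase2}). (3) Carry out the optimization over the whole profile $(\Delta_1,\dots,\Delta_{L-1},D,C)$ jointly, using the sharpened recursion and analyzing (\ref{ineqcase2accurate}) rather than (\ref{ineqcase2}), and show that the maximum occurs at $D=1$, $C\approx 4/3$ with the profile of a single scale, yielding exactly $3^{2t/3+o(t)}$.

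The main obstacle is step (3): the greedy peeling of maximum-missing-degree vertices that underlies the whole recursion is genuinely lossy for ``multi-scale'' dense graphs whose missing edges sit at several density levels at once, and ruling those out seems to need a global argument — for instance a stability statement (a very dense $K_t$-subdivision-free graph either has $\le 3^{2t/3+o(t)}$ cliques or is within $o(t^2)$ edge-edits of a disjoint union of complements of near-perfect matchings on $\le 4t/3$ vertices, from which one counts directly), or an entropy/Shearer-type bound applied to the hypergraph of missing edges, in place of the local recursive bound. A further, subdivision-specific difficulty is that the dictionary of Lemma \ref{subdp} between $\sigma(G)$ and $t(G)$ is looser than its minor analogue in \cite{minor} — a single high-missing-degree vertex can still route only one subdivision path — so one has less room in the $o(t)$ term than in the minor case, and the proof would have to keep this looseness from interfering with the tightness demanded in steps (1)--(3).
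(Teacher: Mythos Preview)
The statement is Conjecture \ref{conj1}, and the paper does not prove it; it is left open. Your submission is likewise not a proof but a research outline, and you are explicit about this (``sketch a possible line of attack'', ``the main obstacle is step (3)''). So there is no proof to compare against, only the paper's own paragraph of speculative remarks following the conjecture.

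Your plan overlaps substantially with that paragraph. Both identify the recursion of Lemma \ref{glemma} as the main source of slack, and both point to analyzing (\ref{ineqcase2accurate}) rather than (\ref{ineqcase2}). The paper additionally proposes the two-term recurrence $f(n,x,t,d)\le f(n-1,x,t,\Delta)+f(n-\Delta-1,x,t-1,\Delta)$, which tracks the drop in $t$, and estimates that it would only reach exponent $1.458$ --- still well short of $\tfrac{2}{3}\log_2 3\approx 1.057$. This is worth knowing, since your step (2) is essentially a refinement of the same recursion and is likely to hit the same ceiling. Your observation in step (1) that Lemma \ref{boundt} is already tight at $D=1$ (giving $2^{t-2x}3^{x}$ for the complement of a matching) is correct and mildly at odds with the paper's suggestion that that lemma ``should be improvable''; you are right that the base case is not where the loss lives. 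Your step (3) and the stability/entropy ideas go beyond what the paper sketches, but as you yourself say, this is exactly where the argument is missing: you have not supplied the global structural input that would rule out multi-scale missing-edge profiles, and without it the outline does not close. In short, this is a reasonable map of the territory --- comparable to, and in places more detailed than, the paper's own --- but it is not a proof and should not be labeled as one.
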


There are several ways we thought of that one might be able to use to improve the exponential constant for forbidden clique subdivisions. First, one can try to improve the exponential constant based on the following idea. Originally, instead of Lemma \ref{glemma}, we wanted to use the simple bound $f(n,x,t,d) \leq f(n-1,x,t,\Delta)+f(n-\Delta-1,x,t-1,\Delta)$ for some $\Delta \leq d$, but found it more difficult to rigorously analyze and introduced the function $g$ instead of $f$ for this purpose. This simple bound is proved by considering the cliques that do not contain a vertex $v$ of maximum missing degree $\Delta$ (giving the first term), and those that do contain $v$ (giving the second term). Forgetting about the $x$, $t$, and $d$ values, we get an upper bound which looks like a linear recurrence relation in $n$, suggesting that $f$ should grow in this range at most like an exponential function with exponential constant being a root of the polynomial $p(y)=y^{\Delta+1}-y^{\Delta}-1$. For $\Delta$ large, this root is approximately $1+\frac{\ln \Delta}{\Delta}$. If this part can be made rigorous, then we would get a further improvement which gives the exponential constant 1.458. 
Second, in Lemma \ref{boundrec}, we used Lemma \ref{boundt} to bound the number of cliques in the remaining induced subgraph. We think this lemma should be improvable to get a better exponential constant. Third, in Lemma \ref{glemma}, we obtained a recursive bound on $g(m,x,t,d)$. For the terms in the summation apart from when $i=0$, the proof shows that the vertices we choose are in the neighborhood of a vertex $v_i$, and the value of $t$ actually decreases by one. We didn't use this decrease in $t$, which should accumulate over many applications of this lemma to improve the exponential constant. Finally, for a graph to contain many cliques and not have a $K_t$-subdivision (and in particular, every $t$-set should have many missing edges in our argument) should force structural information about the graph that could be used to improve the exponential constant further. It is unclear if such attempts worked whether it would get to the conjectured exponential constant.  

\begin{appendices}

\section{Additional Proofs}\label{sec:addproof}
\begin{customclaim}{\ref{claim:upbound1}}
The maximum of $(\Delta_1+1)(\Delta_2+1)\cdots (\Delta_{L-1}+1)$ is achieved when $n_j + \Delta_j = n_{j-1}$ for all $j \leq L-2$. 
\end{customclaim}
\begin{proof}
The proof is by local adjustment. 

We first prove it holds for all $j$ with $\Delta_j > \Delta_{L-2}$. Suppose we have already achieved the optimal. Let $K$ be the smallest integer such that $n_K + \Delta_K < n_{K-1}$. If $\Delta_K = \Delta_{L-2}$, then by the definition of $K$ and the monotonicity of the $\Delta_i$'s, we know
$K > j$ for all $j$ with $\Delta_j> \Delta_{L-2}$, and we are done. So we may assume $\Delta_K > \Delta_{L-2}$. 
Let $l$ be the largest integer such that $\Delta_l = \Delta_K$. Since $\Delta_K > \Delta_{L-2}$, we have that $\Delta_{l+1} < \Delta_K$. If $K = 1$ or $\Delta_K < \Delta_{K-1}$, then we can increase $\Delta_K$ by 1 and the remaining $\Delta_j$'s and $n_j$'s unchanged; the inequalities in (\ref{nd}) still hold while the product increases. This is a contradiction. Therefore $K>1$ and  $\Delta_K = \Delta_{K-1}$.  
Since $\Delta_K > \Delta_{L-2}$, we have $l+1 \leq L-2$. 
We claim that we can let $\Delta_{l+1}' = \Delta_{l+1}+1$ and $n_j' = n_j+1$ for all $j$ with $K \leq j \leq l$. To see this we need to check the inequalities in (\ref{nd}). The inequalities indeed hold: for $K+1 \leq j \leq l$, we have $\Delta_{j} + n_{j}' \leq n_{j-1}'$; $\Delta_K + n_K' \leq n_{K-1}$ since $\Delta_K + n_K < n_{K-1}$ before the change;  $\Delta_{l+1}' \geq  \frac{2n_{l+1}}{t^2}$; and, for $j \leq K \leq l$, we have $\Delta_j \geq \Delta_K = \Delta_{K-1} \geq \frac{2n_{K-1}}{t^2} \geq \frac{2(n_K+1)}{t^2} \geq \frac{2n_j'}{t^2}$. While the product increases since some $\Delta_j$ increase. This leads to a contradiction and thus we have proved $n_j + \Delta_j = n_{j-1}$ for all $j$ with $\Delta_j > \Delta_{L-2}$. 

Now, we proceed to prove $n_j + \Delta_j = n_{j-1}$ for all $j$ with $\Delta_j = \Delta_{L-2}$. If $\Delta_j = \Delta_{L-2}$ for $K \leq j \leq L-2$ while $n_K+ \Delta_K < n_{K-1}$, by the same argument as above we have $\Delta_K = \Delta_{K-1}$. Thus we let $n_j' = n_K+1$ for all $K \leq j \leq L-2$. To check the inequalities in (\ref{nd}), similarly for $K \leq j \leq L-2$, we have
$\Delta_j = \Delta_K = \Delta_{K-1} \geq \frac{2n_{K-1}}{t^2} \geq \frac{2(n_K+1)}{t^2} \geq \frac{2n_j'}{t^2}$. 
Thus it is clear that all the inequalities in (\ref{nd}) holds.

Therefore we have shown that the optimal is achieved when  $n_j + \Delta_j = n_{j-1}$ for all $j \leq L-2$. 
\end{proof}

\begin{customclaim}{\ref{claim:DlessDelta}}
When  $D \leq \Delta$,
\[(\Delta_1+1)(\Delta_2+1)\cdots (\Delta_{L-2}+1)(\Delta_{L-1}+1) 
\leq 
 2^{O(\log^2 \Delta)} \left(\prod_{h=D+1}^{\lceil \Delta \rceil}(h+1)^{(\frac{t^2}{2x})/h}\right)(D+1)^{\frac{t^2}{2x} - \frac{t}{D}}.
 \]
\end{customclaim}
\begin{proof}
Assume $D \leq \Delta$. The number of $j \leq L-2$ which are equal to $D$ is at most $(\frac{t^2D}{2x} - n_{L-2})/D$. Since $n_{L-2} \geq n_{L-1} + D \geq t+D$, we have that the number of $j \leq L-1$ which are equal to $D$ is at most $(\frac{t^2D}{2x} - (t+D))/D+1 = \frac{t^2}{2x} - \frac{t}{D}$. 

Now suppose we have picked the $\Delta_j$'s which are as small as possible while having values no larger than $h-1$ (for $h \geq D+1$). By the induction hypothesis, we know that once $\frac{2n_jx}{t^2} > h$, we cannot choose $\Delta_j = h$. This means $\Delta_j = h$ corresponds to $n_j \leq \lfloor \frac{t^2 h}{2x} \rfloor$. On the other hand, by induction hypothesis, we know that when $n_j \leq \lfloor \frac{t^2( h-1)}{2x} \rfloor$ we have picked $\Delta_j = h-1$. Therefore the number of $j$'s such that $\Delta_j = h$ is at most $\left(\min(m, \lfloor\frac{t^2 h}{2x} \rfloor) - \lfloor\frac{t^2 (h-1)}{2x} \rfloor\right) / h \leq \min\left(m -\lfloor \frac{t^2 (h-1)}{2x} \rfloor, \frac{t^2 }{2x}+1\right)/h$. 
Notice that in this way, before $m$ drops to one, the largest $h$ that $\Delta_j$ can achieve is when $ \frac{t^2 (h-1)}{2x} < m \leq  \frac{t^2 h}{2x}$, i.e., when $\frac{2mx}{t^2}  \leq h < \frac{2mx}{t^2} + 1 = \Delta + 1$. This means $h = \lceil \Delta \rceil$. There are $\left(m - \lfloor \frac{t^2 (h-1)}{2x} \rfloor\right)/h$ number of $j$'s with $\Delta_j = h= \lceil \Delta \rceil$. 


We therefore obtain the bound   
\begin{align}
&(\Delta_1+1)(\Delta_2+1)\cdots (\Delta_{L-2}+1)(\Delta_{L-1}+1) \nonumber \\
 \leq & \left(\lceil \Delta \rceil+1\right)^{ \frac{1}{\lceil \Delta \rceil}\left(m - \left\lfloor \frac{t^2 ( \lceil \Delta \rceil-1)}{2x} \right \rfloor\right)}\left(\prod_{h=D+1}^{\lceil \Delta \rceil-1}(h+1)^{\frac{1}{h}\left(\frac{t^2}{2x}+1\right)}\right)(D+1)^{\frac{t^2}{2x} - \frac{t}{D}}  \nonumber \\ 
\leq & 
2^{O(\log^2 \Delta)} \left(\lceil \Delta \rceil+1\right)^{ \frac{1}{\lceil \Delta \rceil}\left(m - \left( \frac{t^2 ( \lceil \Delta \rceil-1)}{2x} \right)\right)}\left(\prod_{h=D+1}^{\lceil \Delta \rceil-1}(h+1)^{(\frac{t^2}{2x})/h}\right)(D+1)^{\frac{t^2}{2x} - \frac{t}{D}}  \label{ineqcase2accurate} \\
\leq & 2^{O(\log^2 \Delta)} \left(\prod_{h=D+1}^{\lceil \Delta \rceil}(h+1)^{(\frac{t^2}{2x})/h}\right)(D+1)^{\frac{t^2}{2x} - \frac{t}{D}}  \label{ineqcase2} 
\end{align}
The last inequality holds because $\left(m - \lfloor \frac{t^2 (h-1)}{2x} \rfloor\right)/h < \left(\frac{t^2}{2x}+1\right)/h$.
\end{proof}

\begin{customclaim}{\ref{claim:case1largeD}}
When $D \geq \Delta = 2C (C-1)$, $$f(Ct,(C-1)t,t) \leq 2^{1.64t + o(1)}.$$ 
\end{customclaim}
\begin{proof}
It is equivalent to show that the logarithm of (\ref{whole2}) is maximized by $1.64t + o(t)$. In other words, we want to show that 
the logarithm of 
\[ (D+1)^{\frac{(C-1)t}{D}+1}2^{t - \frac{(C-1)t}{D}}  \left( 1 + 2^{-1/D} \right)^{(C-1)t/D}\]
is bounded above by $1.64t + o(t)$ when $D \geq \Delta = 2C (C-1)$.

By plugging in the values for $n$ and $x$, we have $t^{-1}\log g(n,n-t,t,d)$ in this case (ignoring an additive $o(1)$) is at most
\begin{align}
 & \frac{(C-1)}{D} \log (D+1) + 1 - \frac{(C-1)}{D} + \log(1+2^{-1/D}) \frac{(C-1)}{D} \\
 =   &
1+  (C-1)D^{-1} \left(\log (D+1)   - (1 - \log(1+ 2^{-1/D}))\right). \label{temp1}
\end{align}
Notice that the function of $D$, defined as $h(D) = D^{-1}\left(\log (D+1)   - (1 -  \log(1+ 2^{-1/D}))\right)$, is monotone decreasing for $D \geq 1$. Therefore to maximize (\ref{temp1}) we should choose $D = \Delta = 2C(C-1)$. By plugging in $D = 2C(C-1)$ into (\ref{temp1}), 
we want to maximize
\begin{align}
&1 + (C-1)(2C(C-1) )^{-1} \left(\log(2C(C-1) +1)-\left(1 - \log(1+2^{-1/(2C(C-1) )})\right)\right)  \nonumber \\
& = 1 + (2C)^{-1}\left(\log(2C(C-1) +1)-1 + \log(1+2^{-1/(2C(C-1) )})\right). \label{temp11}
\end{align}

By computing its first derivative as a function of $C$, we obtain that (\ref{temp11}) is bounded above by 1.64 when $D \geq \Delta$. Alternatively, the Mathematica code in Code \ref{code1} in Appendix \ref{subsec:temp11} also shows the result. 
Therefore when $D \geq \Delta$ we have that 
$$f(Ct,(C-1)t,t) \leq 2^{1.64t + o(1)}.$$ 
\end{proof}

\begin{customclaim}{\ref{claim:case1smallD}}
When $D \leq \Delta = 2C (C-1)$, $$f(Ct,(C-1)t,t) \leq 2^{3t + o(1)}.$$ 
\end{customclaim}
\begin{proof}
Recall $n = Ct$ and $x = (C-1)t$.
By (\ref{whole2}), we have (ignoring the $o(t)$ term on the right hand side)
\begin{align}
\log_2 f(Ct, (C-1)t, t) \leq \frac{t^2}{2x} \sum_{y = D+1}^{\lceil \Delta \rceil} \frac{1}{y} \log_2(y+1) + \left(\frac{t^2}{2x}-\frac{t}{D}\right)\log_2(D+1) + t-(1-\log_2 (1 + 2^{-\frac{1}{D}}))\frac{x}{D}.  \label{form}
\end{align}
It suffices to show that $3t + o(t)$ is an upper bound the right hand side of (\ref{form}).

We further ignore $o(t)$ terms, which are negligible, throughout the computation. 
When $y \geq 5$, we have $\log(y-1.95) / (y-1.95) \geq \log(1+y) / y$. We replace $\log(1+y) / y$ by $\log(y-1.95) / (y-1.95)$ in (\ref{form}) when $y \geq 5$. Therefore, when $C \geq 3$, by (\ref{nd}) it is clear that $D+1 \geq 2(C-1)+1 \geq 5$.  Also, $(1 + 2^{-1/D}) \leq 2$. 
Thus, we can obtain from (\ref{form}) by dividing by $t$ 
\begin{align}
& \ \ \ \ \ \ t^{-1}\log_2 f(Ct, (C-1)t, t) \nonumber \\
& \leq  \frac{t}{2x} \sum_{y = D+1}^{\lceil \Delta \rceil } \frac{1}{y-1.95} \log_2(y-1.95) + \left(\frac{t}{2x}-\frac{1}{D}\right)\log_2(D+1) + 1-(1-\log_2 (1 + 2^{-\frac{1}{D}}))\frac{x}{Dt}  \nonumber \\
& \leq \frac{t}{4x} \left(  \log^2(\lceil \Delta \rceil  -2.95)-  \log^2(D -1.95) \right) + \left(\frac{t}{2x}-\frac{1}{D}\right)\log_2(D+1) + 1-(1-\log_2 2)\frac{x}{Dt} \nonumber \\
& \leq 1 + \frac{1}{4(C-1)}(\log^2(2C(C-1)-1.95) - \log^2(2(C-1)-1.95)) + \label{plugd} \\
&  \ \ \ \ \ \left(\frac{1}{2(C-1)} - \frac{1}{2C(C-1)}\right) \log_2(2C(C-1)+1) \nonumber \\
& =  1+ \frac{1}{4(C-1)}(\log^2(2C(C-1)-1.95) - \log^2(2(C-1)-1.95)) +  \frac{1}{2C} \log_2(2C(C-1)+1)  \nonumber  \\
& \leq 2.92,  \nonumber
\end{align}
where the first inequality is by (\ref{form}), the second inequality is by using the inequality $$\sum_{y = i}^j \frac{1}{y+s} \log (y+s) \leq \int_{s+i-1}^{s+j-1} \frac{\log x}{x} dx = \frac{1}{2}(\log^2(s+j-1) - \log^2(s+ i-1))$$ which holds for $i+s \geq 2$ where we substitute in $i=D+1$, $j=\lceil \Delta \rceil$, and $s=-1.95$, the third inequality is by plugging in the appropriate $D$ in the range $2(C-1) \leq D \leq 2C(C-1)$ to maximize each summand individually, and the last inequality can be checked by graphing the two functions or formally by using the Taylor series approximation to the function $\log (1+x)$. For the reader's convenience, we also provide a Mathematica code in Code \ref{code2} in Appendix \ref{subsection:plugd}  to verify the last inequality in (\ref{plugd}). 

If $C \leq 3$, we have $f(Ct, (C-1)t, t) \leq 3t$ as any graph with at most $3t$ vertices has at most $2^{3t}$ cliques. 
\end{proof}

\section{Numerical Computation}\label{app:numerical}
\subsection{Maximize (\ref{temp11})}\label{subsec:temp11}
We use the Mathematica algorithm \emph{Maximize} to find the global maximizer of a given function in the specified domain. 
To maximize (\ref{temp11}), we type in the function (\ref{temp11}) in the first line and specify the range of $C$ to be $C \geq 1$; Mathematica will output the numerical maximizer of this function in the range $C \geq 1$. The second line shows the output from Mathematica. It shows that the the maximum is, after rounded to the fifth digit after the decimal, $1.61098$, with the optimized $C$ rounded to $2.83747$. 
\begin{lstlisting}[language=Mathematica, caption={Mathematica code to maximize (\ref{temp11}) when $D \geq \Delta$}, label={code1}]
In[1] := Maximize[{1 + (2*C )^(-1) *(Log[2*C*(C-1) +1]/Log[2] - (1 - Log[1+2^(-1/(2*C*(C-1) ))]/Log[2])),C >= 1},{C}] \\
Out[1] = {1.61098, {$C->2.83747$}}
\end{lstlisting}

\subsection{Last inequality in (\ref{plugd})}\label{subsection:plugd}
We use the Mathematica algorithm \emph{Maximize} to find the global maximizer of a given function in the specified domain. 
To prove the last inequality in (\ref{plugd}), we type in the function in the second to last line in (\ref{plugd}) and specify the range of $C$ to be $C \geq 3$; Mathematica will output the numerical maximizer of this function in the range $C \geq 3$. The second line shows the output from Mathematica. It shows that the maximum is, after rounded to the fifth digit after the decimal, $2.91048$, with the optimized $C$ rounded to $3.59459$. 
 \begin{lstlisting}[language=Mathematica,caption={Mathematica code to show the last inequality in (\ref{plugd})}, label={code2}]
In[2] := Maximize[{1 + Log[2 x (x - 1) - 1.95]^2/(4 Log[2]^2 (x - 1)) -  Log[2 (x - 1) - 1.95]^2/(4 (x - 1) Log[2]^2) + (1/(2 x)) (Log[2 x (x - 1) + 1]/Log[2]),  x >= 3}, {x}]  \\
Out[2] = {2.91048,{x->3.59459}} 
\end{lstlisting}

\subsection{Computer output to show $f(Ct, (C-1)t, t) \leq 2^{1.817t+o(t)}$}\label{subsec:816}
As mentioned in the last paragraph in Subsection \ref{fin2}, in order to bring the exponent in the bound of $f(Ct, (C-1)t, t)$ from $3$ to 1.817, instead of analyzing 
(\ref{ineqcase2}), we can directly upper bound (\ref{ineqcase2accurate}) with the aid of a computer.
We copy the logarithm of the expression in (\ref{form}) (or equivalently, the first inequality in (\ref{plugd})) here for convenience:
\begin{align*}
 & t^{-1}\log_2 f(Ct, (C-1)t, t)  \\
 \leq &  \frac{t}{2x} \sum_{y = D+1}^{\lceil \Delta \rceil } \frac{1}{y-1.95} \log_2(y-1.95) + \left(\frac{t}{2x}-\frac{1}{D}\right)\log_2(D+1) + 1-(1-\log_2 (1 + 2^{-\frac{1}{D}}))\frac{x}{Dt}. 
 \end{align*}
 Recall that $x = (C-1)t$, and $D \leq \Delta = 2C(C-1)$ and $D \geq 2(C-1)$ by the assumption of Claim \ref{claim:case1smallD} and the conditions in (\ref{nd}). 
 Therefore we just need to study the following optimizing problem:
 \begin{equation} \text{max }
\frac{1}{2(C-1)} \sum_{y = D+1}^{\lceil \Delta \rceil } \frac{1}{y-1.95} \log_2(y-1.95) + \left(\frac{1}{2(C-1)}-\frac{1}{D}\right)\log_2(D+1) + 1-(1-\log_2 (1 + 2^{-\frac{1}{D}}))\frac{C-1}{D}  \label{codeeq}
 \end{equation}
 with all possible positive integers $D$ and positive values $C \geq 1$ with 
\[ \lceil 2(C-1) \rceil \leq D \leq \lfloor 2C(C-1) \rfloor.\] 
 We can add the floor or ceiling in the previous expression because $D$ is an integer. 

We now explain how we implement the computer-aided proof. 
Lines 6 to 17 is to write down the expression. Lines 6 to 10 evaluates  the sum $\sum_{y = D+1}^{\lceil \Delta \rceil } \frac{1}{y-1.95} \log_2(y-1.95)$, and defines it to be the function \emph{sum1} with arguments $D, \Delta$. 

Lines 12 to 17 evaluates the sum (\ref{codeeq}) with given values $C, D, \Delta$, and define the sum as \emph{bigsum}, as named in Line 12. 
Line 11 introduces a variable \emph{val}, which is the sum (\ref{codeeq}). 

From Lines 19 to 35, for each fixed $C$, we enumerate all the possible values of $D$ such that $D$ is a positive integer and  $ \lceil 2(C-1) \rceil \leq D \leq \lfloor 2C(C-1) \rfloor$, and see which value of $D$ gives the maximum value of (\ref{codeeq}), which is also the value of \emph{bigsum}($c, D, \Delta$) in Line 28.

 \begin{python}[caption={Python code to show the last inequality in (\ref{plugd})}, label={code3}]
import math
from decimal import *
context = Context(prec=1000)
setcontext(context)

def sum1(D, Delta): #this is for the first term in equation (16): sum 1/y * log(y+1) when y goes through D+1 to Floor[Delta]
    temp = Decimal(0)
    for i in range(int(D+1), int(Delta+1)):
        temp = temp + Decimal(1.0/i*math.log(i+1, 2))
    return temp

def bigsum(c, D, Delta): # this evaluates the right hand side of equation (16), assuming we know D
    c = Decimal(c)
    D = Decimal(D)
    Delta = Decimal(Delta)
    val = Decimal(Decimal(1.0/2)/(c-1)*sum1(D, Delta) + (Decimal(1.0/2)/(c-1)-1/D)*Decimal(math.log(D+1,2)) + 1 -Decimal(1.0-math.log(1+2**(-1/D),2))*(c-1)/D)
    return val

def optBigsum(c):  # This evaluates the maximum possible value for the right hand side of equation (16), among all feasible values of D (when fixed C)
    Dlow = math.ceil(2*(c-1))       # This is the lower bound for D. Note that D is an integer, thus D >= ceiling(2*(C-1))
    Delta = math.floor(2*c*(c-1))   # This is the upper bound for D. Note that D is an integer, thus D <= floor(2*C*(C-1))
    if Delta < Dlow:
        return (None, None)
    else:
        value = 0
        tempD = Dlow
        for D in range(int(Dlow), int(Delta+1)):
            bigs = bigsum(c,D,Delta)
            
            if bigs> value:
                value = bigs
                tempD = D
                
        return (round(value,3), int(tempD))    
 \end{python}      
In summary, for a specific value of $C$, the last function in this program, \emph{optBigsum}, will output the maximum value for (\ref{codeeq}) and also which value of $D$ is the maximizer. 
 We asks Python to run this code to many values of $C$. 

Below we provide some sample output. We actually ran the program for many more values of $C$ giving a much finer net for the space of possible values. The middle column is the value of  $t^{-1}\log f(Ct, (C-1)t, t)$, which we want to show is bounded above by 1.817. 
 \begin{lstlisting}[title={Output 1}, label={code3op1}]
value C is 1.5;    the value is: 0.792;     the optimized D is: 1
value C is 2.0;    the value is: 1.571;     the optimized D is: 3
value C is 2.5;    the value is: 1.62;     the optimized D is: 4
value C is 3.0;    the value is: 1.751;     the optimized D is: 5
value C is 3.5;    the value is: 1.764;     the optimized D is: 7
value C is 4.0;    the value is: 1.806;     the optimized D is: 8
value C is 4.5;    the value is: 1.803;     the optimized D is: 9
value C is 5.0;    the value is: 1.816;     the optimized D is: 10
value C is 5.5;    the value is: 1.807;     the optimized D is: 11
value C is 6.0;    the value is: 1.807;     the optimized D is: 12
value C is 6.5;    the value is: 1.796;     the optimized D is: 13
value C is 7.0;    the value is: 1.792;     the optimized D is: 14
value C is 7.5;    the value is: 1.78;     the optimized D is: 15
value C is 8.0;    the value is: 1.773;     the optimized D is: 17
value C is 8.5;    the value is: 1.761;     the optimized D is: 18
value C is 9.0;    the value is: 1.753;     the optimized D is: 19
value C is 9.5;    the value is: 1.742;     the optimized D is: 20
value C is 10.0;    the value is: 1.733;     the optimized D is: 21
value C is 10.5;    the value is: 1.722;     the optimized D is: 22
\end{lstlisting}
The output above shows that the value of $t^{-1}\log f(Ct, (C-1)t, t)$ (ignoring an additive $o(1)$) has its maximum value between $C = 3.5$ to $C = 6$. Among the above output, $C = 5.0$ gives the largest value $1.816$.  


We have computed even more sample points. Figure \ref{figure:code} is a plot of a subset of the sample points and a superset of the data in Output 1, with more sample points closer to the extremum. The $x$-axis gives the values of $C$, and the $y$-values are $t^{-1}\log f(Ct, (C-1)t, t)$ (ignoring an additive $o(1)$). We can see that the maximum value is when is still 1.816. 
\begin{figure}
\begin{center}
\includegraphics[scale=0.7, trim={2cm 9cm 2cm 9.5cm},clip]{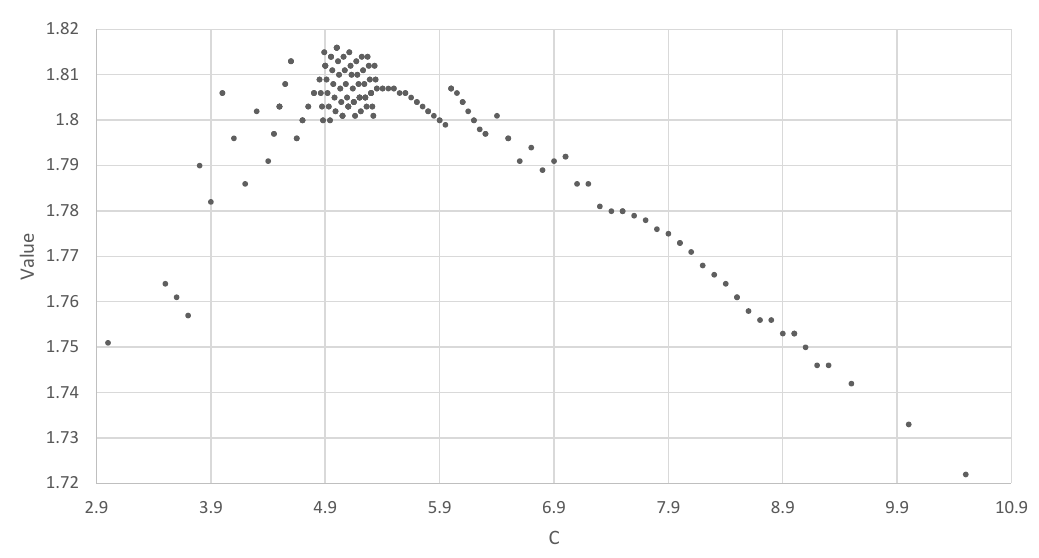}
\caption{Values of $t^{-1}\log f(Ct, (C-1)t, t)$. \footnotesize{the $x$-axis are the values of $C$, and the $y$-values are  $t^{-1}\log f(Ct, (C-1)t, t)$. The highest dot (data point) is given by $C=5.0$, which gives the value 1.816.}}
\label{figure:code}
\end{center}
\end{figure}

For the reader's convenience, we also provide the python source file on the second author's homepage. 

\end{appendices}

\end{document}